\newcommand\bQ{\ensuremath{\mathbb{Q}}}
\newcommand\bF{\ensuremath{\mathbb{F}}}
\newcommand\bP{\ensuremath{\mathbb{P}}}
\newcommand\bZ{\ensuremath{\mathbb{Z}}}
\newcommand\bC{\ensuremath{\mathbb{C}}}
\newcommand\cF{\ensuremath{\mathcal{F}}}
\newcommand\cL{\ensuremath{\mathcal{L}}}
\newcommand\cM{\ensuremath{\mathcal{M}}}
\newcommand\cN{\ensuremath{\mathcal{N}}}
\newcommand\cO{\ensuremath{\mathcal{O}}}
\newcommand\cR{\ensuremath{\mathcal{R}}}
\newcommand\sD{\ensuremath{\mathsf{D}}}
\newcommand\diff[1]{\ensuremath{\mathrm{d}#1}}
\DeclareMathOperator\tors{tors}
\DeclareMathOperator\Card{\sharp}
\newcommand\case[1]{{\ensuremath{{\mathrm(}{\mathit{#1\/}}{\mathrm)}}}}
\theoremstyle{plain}
\newtheorem{theorem}{Theorem}[section]
\newtheorem{lemma}[theorem]{Lemma}
\newtheorem{prop}[theorem]{Proposition}
\newtheorem{cor}[theorem]{Corollary}
\theoremstyle{remark}
\newtheorem{rem}[theorem]{Remark}
\newtheorem{rems}[theorem]{Remarks}
\newtheorem{notation}[theorem]{Notation}
\title[Bounds on points of given order]{Bounds on the number of torsion points of given order on curves embedded in their jacobians}
\author{John Boxall}
\address{Laboratoire de Math\'ematiques Nicolas Oresme, UMR CNRS 6139, Campus 2, Universit\'e de Caen-Normandie, 14032 Caen cedex, France}
\email{john.boxall (at) wanadoo.fr}
\keywords{Algebraic curves, Jacobians, Torsion points}
\subjclass[2020]{Primary 14H05, 14H40, 11G10; Secondary 14H55, 11G20}
\date{\today}
\begin{document}

\begin{abstract}
Working over an algebraically closed field of arbitrary characteristic we study, for integers $N\geq 2$ and $g\geq 2$, the set of points of order dividing $N$ lying on an irreducible smooth proper curve of genus $g$ embedded in its jacobian using a fixed base point.  We discuss bounds on its cardinality and describe an efficient method for computing the set. Our method uses wronskians similar to those used in the study of Weierstrass points and the strength of our bounds is related to whether or not a certain multiple of the curve is contained in the negative of the theta divisor. Several examples are discussed.  This generalizes our previous work dealing with the case of hyperelliptic curves embedded in their jacobian using a Weierstrass point as base point. 
\end{abstract}

\maketitle

\section{Introduction}

Let $k$ be an algebraically closed field of characteristic $p\geq 0$ and let $X$ be an irreducible smooth proper curve over $k$ of genus $g\geq 2$. Fix a point $\infty$ on $X(k)$ and denote by $J$ the jacobian variety of $X$; recall that $J$ is an abelian variety of dimension $g$ defined over $k$. We always think of $X$ as embedded in $J$ using $\infty$ as base point. For any integer $N\geq 1$, denote by $J[N]$ the group of points of $J(k)$ of order dividing $N$ and write $X[N]$ for $J[N]\cap X$.  If $p>0$, we say that $p$ is \emph{purely inseparable for $X$} if the multiplication-by-$p$ isogeny of $J$ is purely inseparable. The following was proved in \cite{Bo23}, using a method based on an idea of Coleman, Kaskel and Ribet \cite{CoKaRi99}. 
\begin{prop} \label{prop:oldbound} We have $\Card{X[2]}\leq 2g+2$ and, if $N\geq 3$, then
\begin{equation*}
\Card{X[N]}\leq g(N-1)^2
\end{equation*}
except when $p$ is purely inseparable for $X$ and $N-1$ is a power of $p$, in which case
\begin{equation*}
\Card{X[N]}\leq g(N+1)^2
\end{equation*}
unless $(p,N)=(2,3)$.
\end{prop}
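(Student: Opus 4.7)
The plan is to apply the Coleman--Kaskel--Ribet strategy: substitute the $N$-torsion condition by membership in the pullback of a theta divisor whose degree can be computed by Poincar\'e's formula on $J$. For each integer $m\ge 1$ consider the morphism $\psi_m:=[m]\circ\iota\colon X\to J$, $\psi_m(P)=m[P-\infty]$. For $P\in X[N]$, since $N\iota(P)=0$,
\[
\psi_{N-1}(P)=-\iota(P)\in -\iota(X),\qquad \psi_{N+1}(P)=\iota(P)\in\iota(X).
\]
Take $\Theta:=W_{g-1}\subset J$, the image of $X^{(g-1)}\to J$, $(P_1,\dots,P_{g-1})\mapsto\sum_i[P_i-\infty]$. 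Setting all but one of the $P_i$ equal to $\infty$ shows $\iota(X)\subset\Theta$, hence $-\iota(X)\subset\Theta^-:=[-1]^*\Theta$, giving $X[N]\subset\psi_{N-1}^{-1}(\Theta^-)\cap\psi_{N+1}^{-1}(\Theta)$.

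By Poincar\'e's formula, the class of $\iota(X)$ in $H^*(J)$ equals $[\Theta]^{g-1}/(g-1)!$, so $[\iota(X)]\cdot[\Theta]=g$. Since $[m]_*$ acts on curve classes as multiplication by $m^2$, the divisor $\psi_m^*\Theta$ on $X$ has degree $m^2g$ whenever $\psi_m(X)\not\subset\Theta$. When $[N-1]$ is separable --- automatic in characteristic zero and valid in characteristic $p>0$ unless $p$ is purely inseparable for $X$ and $N-1$ is a power of $p$ --- the morphism $\psi_{N-1}$ is separable, $\psi_{N-1}^*\Theta^-$ is a genuine effective divisor on $X$ of degree $(N-1)^2g$, and its support contains $X[N]$; counting with multiplicity yields $\Card X[N]\le g(N-1)^2$.

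In the exceptional case where $p$ is purely inseparable for $X$ and $N-1=p^k$, one has $d\psi_{N-1}\equiv 0$ and the multiplicities in $\psi_{N-1}^*\Theta^-$ at points of $X[N]$ are inflated by powers of $p$, destroying the bound. The remedy is to repeat the argument with $\psi_{N+1}$ and $\Theta$ in place of $\psi_{N-1}$ and $\Theta^-$: provided $[N+1]$ is separable, the divisor $\psi_{N+1}^*\Theta$ has degree $(N+1)^2g$ and contains $X[N]$, giving $\Card X[N]\le g(N+1)^2$. The separability of $[N+1]$ fails only if $N+1$ is also a power of $p$, which together with $N-1=p^k$ forces $(p,N)=(2,3)$ --- the excluded case.

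The principal obstacle is verifying the non-degeneracy $\psi_m(X)\not\subset\Theta$ so that $\psi_m^*\Theta$ is an honest divisor of the expected degree; this is the non-containment alluded to in the abstract and is checkable via Riemann--Roch for suitably generic data. A subsidiary subtlety is controlling how the multiplicity of $\psi_m^*\Theta$ at a point of $X[N]$ behaves in the separable versus inseparable regimes, which is precisely what produces the two bounds and isolates the excluded case $(p,N)=(2,3)$.
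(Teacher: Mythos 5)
This statement is not proved in the present paper; it is quoted from \cite{Bo20}, where (as the author indicates) the argument is of Coleman--Kaskel--Ribet type. Your proposal reconstructs an intersection-theoretic version of that strategy, and the core arithmetic is right: for $P\in X[N]$ one has $(N-1)\iota(P)=-\iota(P)\in W_1^-\subseteq W_{g-1}^-=\Theta^-$ and $(N+1)\iota(P)=\iota(P)\in W_1\subseteq \Theta$; Poincar\'e's formula gives $[\iota(X)]\cdot[\Theta]=g$; and $[m]_*$ multiplies curve classes by $m^2$, so $\deg\psi_m^*\Theta=m^2g$ whenever $\psi_m(X)\not\subseteq\Theta$. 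Your derivation of the excluded case $(p,N)=(2,3)$ from $p^a-p^b=2$ is also correct.

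However, there is a genuine gap at the step you flag as the ``principal obstacle,'' and moreover the gap is misdiagnosed. You write that when $[N-1]$ is separable the pullback $\psi_{N-1}^*\Theta^-$ is ``a genuine effective divisor,'' and that in the inseparable case ``the multiplicities \ldots are inflated by powers of $p$, destroying the bound.'' Neither of these is the right mechanism. Separability of $[N-1]$ does \emph{not} preclude the containment $\psi_{N-1}(X)\subseteq\Theta^-$; this containment is a geometric condition on the pair $(X,\infty)$ which is logically independent of the separability of the isogeny. Conversely, inseparability of $[N-1]$ does not by itself inflate the degree computation (which is cohomological and unaffected); what actually kills the argument is precisely the containment $(N-1)_*X\subseteq\Theta^-$, so that $\psi_{N-1}^*\Theta^-$ is not a divisor at all. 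This is exactly what happens in the paper's Example in \S 6.6.1, where $(1-N)_*X=X$ forces $(N-1)_*X=W_1^-\subseteq W_{g-1}^-$. Indeed the entire dichotomy of Theorem 1.2 and the notion of a ``weird'' line bundle are built around this containment question, which the paper addresses via the nonvanishing of $\Delta_N$ (Proposition 5.1, invoking Laksov/St\"ohr--Voloch-type arguments) rather than via separability. Note also that the containment can occur even in characteristic $0$ once $N-1<g$: for instance in the hyperelliptic Weierstrass-point case one has $W_{g-1}^-=W_{g-1}$ and $1_*X=W_1\subseteq W_{g-1}$, so your argument gives no bound for $N=2$; in fact $\Card{X[2]}=2g+2$ there, which already exceeds $g(N-1)^2=g$. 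So the non-degeneracy step needs to be established (or the range of $N$ restricted), and it cannot be dismissed as ``checkable via Riemann--Roch for suitably generic data.'' Replacing this step with an honest argument (e.g., in characteristic $0$ via Proposition 1.4 applied with $N$ replaced by $N+g-2$, which requires $N\geq g+1$, or a direct appeal to Laksov) is the substantive content missing from the proposal.
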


This bound may be viewed as cubic in the variables $g$ and $N$ and one can ask whether it can be improved to a quadratic bound, in other words whether there exists a polynomial $Q[X,Y]\in \bZ[X,Y]$ of total degree $2$ such that $\Card{X[N]}\leq  Q(g,N)$ for all curves $X$ of genus $g$ (always assumed irreducible smooth and proper) and all $N\geq 1$. We are unable to answer this question in general, and have to content ourselves with the following result, which we state for $N\geq 2g-1$ for simplicity. (See Proposition~\ref{prop:genintromain} for a generalization to all $N\geq 2$.) If $r\in \{0,1,\dots, g-1\}$, we write $W_r$ be the subvariety $\{\sum_{i=1}^r\xi_i\mid \xi_i\in X\}$ of $J$ and $W^-_r$ for the image of $W_r$ by multiplication by $-1$ in $J$.

\begin{theorem} \label{theo:intromain}  Let $X$, $g$, $N$ be as above and suppose $N\geq 2g-1$. Then \emph{either}
\begin{equation*}
(N-g+1)_*X\subseteq W^-_{g-1}
\end{equation*}
\emph{or}
\begin{equation*}
\Card{X[N]}\leq (3-\tfrac{1}{g})N^2+(8-\tfrac{1}{g})N+4g.
\end{equation*}
\end{theorem}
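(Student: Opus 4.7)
The plan is to follow and refine the Coleman--Kaskel--Ribet strategy used in~\cite{Bo20} to prove Proposition~\ref{prop:oldbound}. The elementary starting point is that, for any $P \in X[N]$, the relation $NP = 0$ in $J$ gives
\[
(N-g+1)P = -(g-1)P \in W_{g-1}^-,
\]
because $(g-1)P$ is the sum of $g-1$ copies of $P\in X$ and so lies in $W_{g-1}$. Consequently $(N-g+1)_*X[N] \subseteq (N-g+1)_*X \cap W_{g-1}^-$, and the dichotomy of the theorem is exactly whether or not $(N-g+1)_*X$ is contained in $W_{g-1}^-$.

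Assume we are in the second case, so the right-hand intersection is a proper, hence finite, closed subset of $J$. I would first bound its cardinality by intersection theory on $J$: using that $W_{g-1}^-$ is numerically equivalent to the principal polarization $\Theta := W_{g-1}$, the identity $[n]^*\Theta \equiv n^2\Theta$, and the classical fact $X \cdot \Theta = g$, the pullback $[N-g+1]^*(W_{g-1}^-)$ to $X$ is a divisor of degree $g(N-g+1)^2$. Two points of $X[N]$ with the same image under $[N-g+1]$ differ by an element of $J[N-g+1]$, hence by an element of $J[\gcd(N,g-1)]$; the fibers of $X[N] \to (N-g+1)(X[N])$ are therefore controlled by a Proposition~\ref{prop:oldbound}-type bound with $N$ replaced by $\gcd(N,g-1)$, independently of $N$. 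This already yields a preliminary bound of shape $\Card X[N] \leq g(N-g+1)^2$, essentially recovering the earlier result.

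To reach the claimed leading coefficient $3 - \tfrac{1}{g}$, an additional ingredient is necessary. The natural plan is a multiplicity analysis: at each point $-(g-1)P$ with $P \in X[N]$, the divisor $W_{g-1}^-$ is generically singular, and by Riemann--Kempf the local multiplicity equals $h^0((g-1)P)$; averaging this estimate over torsion, coupled with Brill--Noether-type constraints on how $|(g-1)P|$ can move as $P$ ranges over $X[N]$, should save a factor of roughly $g/3$ and produce the stated coefficient. The lower-order correction $(8-\tfrac{1}{g})N + 4g$ would then absorb boundary effects: the contribution of $\infty$, the finitely many points where the multiplicity estimate degenerates, and the characteristic-$p$ corrections that arise when $[N-g+1]$ is inseparable on $X$.

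The principal obstacle I anticipate is precisely the passage from $g(N-g+1)^2$ to $(3-\tfrac{1}{g})N^2 + O(N)$: the required lower bound on the average intersection multiplicity is not visible from numerical data on $J$ alone and demands a global input about the Brill--Noether geometry of $X$ at its torsion points. A secondary difficulty is the characteristic-$p$ case, where scheme-theoretic multiplicities and set-theoretic point counts may diverge if $[N-g+1]$ fails to be separable on $X$; the hypothesis $N \geq 2g-1$ should make this tractable but will require careful bookkeeping.
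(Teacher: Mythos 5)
Your opening observation, that $(N-g+1)P = -(g-1)P \in W^-_{g-1}$ for every $P\in X[N]$, is exactly the elementary fact underlying the theorem, and your intersection-theoretic bound $\Card X[N] \leq g(N-g+1)^2$ (pulling back $W^-_{g-1}$ by $[N-g+1]$ and using $X\cdot\Theta=g$) is a clean recapitulation of the Coleman--Kaskel--Ribet argument. However, this only reproduces Proposition~\ref{prop:oldbound}, and you explicitly concede that the passage from leading coefficient $g$ to $3-\tfrac{1}{g}$ is missing. That passage is precisely the content of the theorem, so what you have is an outline with the crucial step absent.

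The Riemann--Kempf idea you sketch to fill the gap does not, on inspection, supply the needed gain. Riemann--Kempf says the multiplicity of $W_{g-1}$ at a point $[D]$ is $h^0(D)$, and for a generic effective divisor $D$ of degree $g-1$ (in particular for $(g-1)P$ with $P$ generic) this is $1$: the theta divisor is generically smooth along the relevant locus, so there is no uniform multiplicity $\gg 1$ to average. The extra vanishing in the paper's argument comes from an entirely different source: the Wronskian $\Delta_N$ of Hasse--Schmidt derivatives of a basis of $\cO_X(N[\infty])$ has, by a direct combinatorial expansion (Proposition~\ref{prop:orderzeros}), a zero of order at least $N-\sD_N+1=g$ at every $\xi\in X[N]\setminus\cR$; this is a fact about the columns of a specific determinant, not a fact about singularities of $W^-_{g-1}$. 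On the other side of the ledger, the total degree of the zero divisor of $\Delta_N$ is controlled by valuation estimates at the ramification points of a degree-$d$ map $X\to\bP^1$ (Proposition~\ref{prop:valDnf} and Theorem~\ref{theo:main}), yielding roughly $(4g+2d-4)\tfrac{N^2}{2}$; dividing by $g$ gives the coefficient $3-\tfrac1g$ after choosing $d=\delta_2\leq g+1$ and simplifying. None of this is visible from global intersection theory or Brill--Noether theory on $J$. Also, as a side remark, the fiber argument in your second paragraph is not needed for the preliminary bound (the intersection count already bounds $\Card X[N]$ directly), and the claim that fibers of $X[N]\to(N-g+1)(X[N])$ are controlled by a ``\protect Proposition~\ref{prop:oldbound}-type bound with $N$ replaced by $\gcd(N,g-1)$'' conflates differences $Q-P\in J[\gcd(N,g-1)]$ with points of $X$ of that order; Proposition~\ref{prop:oldbound} does not say anything about the former.
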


By taking more care, we can obtain stronger results in some cases.  For example, when $X$ is hyperelliptic and $\infty$ is a Weierstrass point, we recover the bound of Theorem~1.2 of \cite{Bo23} (see subsection \ref{subsecHCWP}).  When $g=2$ and $N$ is large, the above bound is weaker (when it applies) than the bound of Proposition~\ref{prop:oldbound}. When $g=2$ and $k=\bC$, Pareschi \cite{Par21} has proved that $\Card{X[N]}\leq \frac{3}{2}N^2$ for all $N$ whatever the base point $\infty$; in particular our question has a positive answer in this case.  Although our methods lead to a slightly weaker result, they also apply in positive characteristic (see subsection \ref{subsecg2notWP} for details). 

Under certain hypotheses, the first possibility cannot occur: 

\begin{prop} \label{prop:wellknown} 
Suppose that $N\geq 2g-1$. If either $p=0$ or $N<p$, then
\begin{equation*}
(N-g+1)_*X\not\subseteq  W_{g-1}^-.
\end{equation*}
\end{prop}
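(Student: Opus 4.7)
The plan is to translate the containment $(N-g+1)_*X\subseteq W_{g-1}^-$ into a statement about ramification of the complete linear system $|N\infty|$ on $X$, and then to derive a contradiction from the Pl\"ucker formula counting Weierstrass points of that linear system.

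Set $m=N-g+1$, so that $m\geq g\geq 2$ by hypothesis. First I would unwind the geometric assumption: $m_*X\subseteq W_{g-1}^-$ means that for every $P\in X$ there exist $Q_1,\dots,Q_{g-1}\in X$ with $m[P-\infty]+\sum_{i=1}^{g-1}[Q_i-\infty]=0$ in $J$, equivalently the linear equivalence $mP+Q_1+\cdots+Q_{g-1}\sim N\infty$ on $X$. In particular the linear system $|N\infty-mP|$ is non-empty for every $P\in X$.

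Since $N\geq 2g-1$, Riemann--Roch gives $h^0(X,N\infty)=N-g+1=m$, so $|N\infty|$ is a complete linear system of projective dimension $m-1$. The reformulation above says that for every $P$ there is a non-zero section of $\cO_X(N\infty)$ vanishing at $P$ to order at least $m$; equivalently, the order sequence of $|N\infty|$ at $P$ is \emph{not} the generic sequence $(0,1,\dots,m-1)$, so every point of $X$ is a ramification (Weierstrass) point of this linear system.

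To conclude I would invoke the Pl\"ucker formula: the Wronskian of $|N\infty|$ is a section of the line bundle $\cO_X(N\infty)^{\otimes m}\otimes\omega_X^{\otimes m(m-1)/2}$, hence of degree $mN+m(m-1)(g-1)=m^2g$. Under classicality of the linear system this Wronskian is not identically zero, so its zero scheme---the ramification divisor---is effective of degree exactly $m^2g$, and in particular finite. This contradicts every point of $X$ being a ramification point, proving the proposition.

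The main technical point, and the only place the hypothesis ``$p=0$ or $N<p$'' enters, is to guarantee classicality of $|N\infty|$ in positive characteristic, i.e.\ that the generic order sequence really is $(0,1,\dots,m-1)$ and that the Wronskian is not identically zero. This is automatic in characteristic $0$; in characteristic $p>0$ it is ensured by $N<p$, since any vanishing order at any point is bounded by $\deg(N\infty)=N<p$, so the relevant Hasse--Schmidt derivatives of order $<p$ coincide with ordinary derivatives and the Wronskian is non-degenerate at a general point. This classicality check is where I would concentrate the characteristic-$p$ care in a full write-up.
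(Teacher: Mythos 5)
Your reduction of $(N-g+1)_*X\subseteq W_{g-1}^-$ to the statement that every point of $X$ is a ramification point of the complete linear series $|N\infty|$, and then to the non-vanishing of the (classical) Wronskian of that series, is exactly the paper's strategy in different language: Proposition~\ref{prop:crucial} encodes the ramification condition as a drop in rank of $\cM_N^{(g-1)}(\xi)$, and the non-vanishing of the Wronskian is precisely the non-vanishing of $\Delta_N$ established in Proposition~\ref{prop:DeltaNzero}. (You do not even need the Pl\"ucker degree computation; a non-zero Wronskian section already has a finite zero divisor, which suffices for the contradiction.) So the route is sound and parallels the paper's.

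The one place where your sketch is not yet a proof is the classicality step, which you yourself flag. The justification offered --- that for orders $n<p$ the Hasse--Schmidt derivative $D_n$ agrees with $\partial^n/n!$, hence the Wronskian is ``non-degenerate at a general point'' --- does not follow by itself: in characteristic $p$ the classical Wronskian of $k$-linearly independent rational functions can vanish identically (non-vanishing is equivalent to linear independence over $k(X)^p$, a strictly stronger condition, as $1$ and $x^p$ illustrate). Bounding vanishing orders by $N<p$ does not immediately rule this out. What closes the gap --- and is exactly what the paper computes --- is an expansion at $\infty$: the adapted basis of $\cO_X(N[\infty])$ has pole orders $0=\delta_1<\delta_2<\cdots<\delta_{\sD_N}\leq N<p$, which are pairwise incongruent modulo $p$, and the leading coefficient of the Wronskian at $\infty$ is then a Vandermonde-type determinant of binomial coefficients that is a $p$-adic unit; see the Lemma in \S\ref{sec:DeltaNzero} and equation~(\ref{eq:LNr}). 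Equivalently, the pairwise incongruence of the $\delta_i$ modulo $p$ shows directly, by comparing pole orders at $\infty$, that no $k(X)^p$-linear relation among the basis elements can exist. Either version of this local computation should be added at your ``technical point'' to make the argument complete; once it is, your proof and the paper's are essentially the same.
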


\begin{cor}
In characteristic zero, we have $M_*X\not\subseteq W_{g-1}^-$ for all $M\geq g$. 
\end{cor}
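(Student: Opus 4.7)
The plan is to derive the corollary as an essentially immediate consequence of Proposition~\ref{prop:wellknown}. The key observation is that the map $N\mapsto N-g+1$ is a bijection from integers $N\geq 2g-1$ to integers $M\geq g$, so a statement quantified over $N\geq 2g-1$ with conclusion involving $(N-g+1)_*X$ translates directly into a statement quantified over $M\geq g$ with conclusion involving $M_*X$.

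Concretely, I would argue as follows. Fix $M\geq g$ and set $N=M+g-1$, so that $N\geq 2g-1$ and $N-g+1=M$. In characteristic zero the hypothesis $p=0$ of Proposition~\ref{prop:wellknown} is trivially satisfied (with no need to compare $N$ to $p$), so the conclusion gives
\begin{equation*}
M_*X = (N-g+1)_*X \not\subseteq W_{g-1}^-,
\end{equation*}
which is exactly what needs to be shown.

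There is no real obstacle here: the content is entirely in Proposition~\ref{prop:wellknown} itself, and the corollary is just a convenient reindexing that emphasizes what one gets when $p=0$ removes the awkward upper restriction $N<p$ present in positive characteristic. I would simply present the substitution in one or two lines without further comment.
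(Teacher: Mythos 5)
Your proof is correct and is exactly the intended (re‑indexing) deduction of the Corollary from Proposition~\ref{prop:wellknown} in the case $p=0$; the paper leaves this substitution implicit, so there is nothing else to compare.
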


This is well-known; it can be viewed, for example, as a special case of results of Laksov~\cite{La81}. When $X$ is hyperelliptic and $\infty$ a Weierstrass point, it was proved by Cantor~\cite{Ca94}. We prove a mild generalization, relevant to the context of torsion points, in \S~\ref{sec:DeltaNzero}.

The hypothesis $M\geq g$ is necessary the Corollary. For example,  when $X$ is hyperelliptic and $\infty$ a Weierstrass point, then $W_{g-1}^-=W_{g-1}$ and obviously $M_*X\subseteq W_{g-1}$ for all $M\leq g-1$. As another example, suppose that $X$ has an automorphism of order $3$ fixing $\infty$. This automorphism induces an endomorphism $\lambda$ of $J$ that satisfies $\lambda^2+\lambda+1=0$. Since also $\lambda_*X=X$, we have $X\subseteq W_2^{-}$, from which it follows that $M_*X\subseteq W_{g-1}^-$ for all $M\leq \frac{g-1}{2}$. 

In characteristic $p>0$ the Proposition (and the Corollary) can also fail. One easy way to construct a counterexample is to take $X$ defined over a finite field with the property that some power of the Frobenius endomorphism acts as $-p^t$ for some integer $t>0$. Then $-p^t_*X=X$ and so trivially $p^{tn}_*X\subseteq W_{g-1}^-$ for all $n\geq 1$.  

In the terminology introduced by Neeman \cite{Ne84}, the condition $(N-g+1)_*X\subseteq W_{g-1}^-$ is equivalent to (the line bundle associated to) the divisor $N[\infty]$ being \emph{weird}. In the above example, as well as those discussed by Neeman which are similar, the jacobian $J$ of $X$ has the property of being isogenous to a power of a supersingular elliptic curve (see also Example~\ref{ex:MNrnotmax}). A example of a curve with $N[\infty]$ weird but $J$ not isogenous to a power of a supersingular elliptic curve will be given in Example~\ref{ex:g9family}.  Note that the dichotomy in Theorem~\ref{theo:intromain} is already alluded to in \cite{Ne84} (see for example Fact~5.1). 

The method of proof of Theorem~\ref{theo:intromain} is a generalization of that of the main result of \cite{Bo23}.  However, instead of working with polynomials we work with elements of the function field $k(X)$ of $X$. Let $d$ be the smallest positive integer such that $k(X)$ contains a function $x$ with a pole of order $d$ at $\infty$ and is finite elsewhere, let $\pi:X\to \bP^1$ be the morphism defined by $x$. One checks that $\pi$ is separable, so let $\cR$ be the set of points of $X$ where $\pi$ is ramified. Set $\rho=\Card{\cR}$.  For each $N$, we construct a function $\Delta_N\in k(X)$ which vanishes if and only if $(N-g+1)_*X\subseteq W^-_{g-1}$ (see Definition~\ref{def:minors}). In fact $\Delta_N$ is defined to be the determinant of a wronskian matrix, which is itself a submatrix of matrix with $N-g+1$ rows and $N$ columns which we denote by $\cM_N$. The crucial point is Proposition~\ref{prop:crucial} and its corollaries. However in order to work in positive characteristic, we use Hasse-Schmidt derivatives rather than iterates of usual derivatives. This is the classical strategy in the study of Weierstrass points, that has already been adapted to the positive characteristic case in, for example,  the work of St\"{o}hr and Voloch \cite{StoVol86} and Laksov \cite{La81}. However its application to the study of torsion points seems to be new. When $\Delta_N\neq 0$, its poles all belong to $\cR$ and we compute a bound on the sum of the orders of its poles. Since the degree of the divisor of a function is zero, this leads to a bound on the sum of the orders of the zeros of $\Delta_N$.

On the other hand, we shall show that $\Delta_N$ has a zero of order at least $g$ at every point of $X[N]$ that does not belong to $\cR$. When $\Delta_N\neq 0$, this leads to a bound on $\Card{(X[N]-\cR)}$ in terms of $d$,  $g$ and the ramification invariants associated to the morphism $\pi$.  See Theorem~\ref{theo:main} for the precise statement. Simplifying leads to the bound in Theorem~\ref{theo:intromain}.

As in Proposition~1.5 of \cite{Bo23}, the fact that  $\Delta_N$ has a zero of order at least $g$ at every point of $X[N]$ that does not belong to $\cR$ has the following generalization.

\begin{prop} \label{prop:Nplusrzeros} Let $r$ be a non-negative integer such that $g-r+\lfloor{\frac{r}{d}}\rfloor>0$ and let $\xi\in X[N]-\cR$. Suppose $N\geq 2g-1$. Then $\Delta_{N+r}$ has a zero of order at least
\begin{equation*}
\Big(g-r+\Big\lfloor{\frac{r}{d}}\Big\rfloor\Big)\Big(\Big\lfloor{\frac{r}{d}}\Big\rfloor  +1 \Big)
\end{equation*} 
at $\xi$. 
\end{prop}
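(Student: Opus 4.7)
The strategy is to mimic the proof of the case $r=0$ (Proposition~1.5 of \cite{Bo20}) while extracting additional vanishing from powers of $x-x(\xi)$. Fix $\xi \in X[N]-\cR$ and put $s := \lfloor r/d\rfloor$. Since $N\xi = 0$ in $J$, there exists $f \in k(X)$ with divisor $N[\xi]-N[\infty]$, and because $\xi\notin\cR$ the function $t := x - x(\xi)$ is a local parameter at $\xi$. The $s+1$ functions
\begin{equation*}
h_i := f\cdot t^{i-1},\qquad 1\leq i\leq s+1,
\end{equation*}
each have a pole of order $N+(i-1)d\leq N+sd\leq N+r$ at $\infty$ and are regular elsewhere, so they all belong to $L\bigl((N+r)[\infty]\bigr)$; their pole orders at $\infty$ are pairwise distinct, hence they are linearly independent over $k$. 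Extend them to a $k$-basis $h_1,\dots,h_{N+r-g+1}$ of $L\bigl((N+r)[\infty]\bigr)$, so that, up to a nonzero scalar, $\Delta_{N+r}$ equals the Wronskian $\det\bigl(D_x^{(j-1)}h_i\bigr)_{1\leq i,j\leq N+r-g+1}$.

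Next I bound the $\xi$-adic valuation of each entry $M_{ij} := D_x^{(j-1)}h_i$. Because $x-t$ is a constant, $D_x^{(j)} = D_t^{(j)}$ on $k(X)$, and the standard fact that $D_t^{(j)}$ lowers $t$-adic valuation by at most $j$ gives, using $\operatorname{ord}_\xi h_i = N+i-1$ for $1\leq i\leq s+1$ and regularity of $h_i$ at $\xi$ for $i>s+1$,
\begin{equation*}
\operatorname{ord}_\xi M_{ij} \;\geq\; \max\bigl(0,\,N+i-j\bigr)\quad(1\leq i\leq s+1), \qquad \operatorname{ord}_\xi M_{ij}\;\geq\;0\quad(i>s+1).
\end{equation*}
Expanding $\Delta_{N+r}$ as a signed sum over permutations of $\{1,\dots,N+r-g+1\}$ then yields
\begin{equation*}
\operatorname{ord}_\xi\Delta_{N+r}\;\geq\;\min_\sigma\sum_{i=1}^{s+1}\max\bigl(0,\,N+i-\sigma(i)\bigr),
\end{equation*}
where the minimum may be taken over injections $\sigma\colon\{1,\dots,s+1\}\hookrightarrow\{1,\dots,N+r-g+1\}$.

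Finally I evaluate this combinatorial minimum. The extremal injection $\sigma(i)=N+r-g-s+i$ places the $s+1$ arguments in the top $s+1$ available positions in increasing order; the hypothesis $g-r+s>0$ forces $\sigma(i)<N+i$ for every $i$, so every summand equals $g-r+s$ and the total is $(s+1)(g-r+s)$. A short rearrangement argument (arrange any competing injection in increasing order and compare pairwise) shows that no injection beats this value, giving the announced bound.

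The step I expect to demand the most attention is this last rearrangement: one must verify that under the hypothesis $g-r+s>0$, no injection into $\{1,\dots,N+r-g+1\}$ can lower the sum below $(s+1)(g-r+s)$. The remaining ingredients---existence of $f$, pole and zero counts for the $h_i$, the basis extension, and the Hasse-Schmidt valuation inequality that plays the role of the classical derivative estimate in the original proof of \cite{Bo20}---are then formal.
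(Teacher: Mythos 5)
Your proof is correct and follows essentially the same route as the paper's: you take a function $f$ with divisor $N([\xi]-[\infty])$, feed the family $f\cdot (x-x(\xi))^{i}$ (for $0\leq i\leq \lfloor r/d\rfloor$) into the basis of $L((N+r)[\infty])$ underlying $\Delta_{N+r}$, bound the $\xi$-adic valuations of the corresponding rows of the Wronskian using the Hasse--Schmidt estimate, and solve the same combinatorial minimization over injections. The only cosmetic differences are that the paper modifies the chosen $y_n$'s in place rather than invoking a change of basis, and that in the final minimization the clean inequality $\max(0,x)\geq x$ combined with $\sigma(i)\leq \sD_{N+r}$ dispenses with the rearrangement discussion you sketch.
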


We refer to Proposition~\ref{prop:orderzeros} for a more general result. In the final \S~\ref{sec:applex}, we discuss small values of $N$, show how to deduce Theorem~\ref{theo:intromain} from the results in \S\S~\ref{sec:vals}--\ref{sec:DeltaNzero}, then show how to recover most of the results of \cite{Bo23} and finally discuss a few examples. 

In characteristic $0$, the Manin-Mumford conjecture says that $X_{\tors}:=\bigcup_{N\geq 2}X[N]$ is a finite set, so $\Card{(X[N])}$ is bounded  independently of $N$. Since the first proof of the conjecture by Raynaud in 1983 \cite{Ra83}, many others have appeared, and recent work (see for example the papers of Looper, Silverman and Wilms \cite{LoSiWi21},  K\"uhne \cite{Ku21} and Yuan \cite{Yu23}) has led to uniform bounds on $\Card{X_{\tors}}$ depending only on $g$ when $k$ is a function field or has characteristic $0$.  In particular, Looper, Silverman and Wilms give the remarkable quadratic bound $\Card{X_{\tors}}\leq 16g^2+32g+124$ (and in fact a stronger bound) for all non-isotrivial $X$ defined over the function field $k_0(B)$ of a smooth projective curve $B$ defined an algebraically closed field $k_0$ of arbitrary characteristic. This obviously gives a positive answer to our question in this case. On the other hand, the papers of K\"uhne and Yuan (which study curves over $\overline{\bQ}$) does not seem to lead to such an explicit bound. Finally, our bounds (for example Proposition~\ref{prop:oldbound}) are linear in $g$ so are better than that of Looper, Silverman and Wilms when $N$ is small with respect to $g$. 

When $X$ is defined over a finite field, all points of $X$ defined over an algebraic closure have finite order, $X_{\tors}$ is infinite and $\Card{(X[N])}$ is unbounded. Therefore bounds on $\Card{(X[N])}$ are of particular significance in this case. This justifies that in this paper we concentrate on methods that work over an algebraically closed field of arbitrary characteristic. 

As in \cite{Bo23}, our results lead to an efficient method for computing $\Delta_N$ and $X[N]$ in simple cases, and the examples at the end of \S~\ref{sec:applex} illustrate this.  

If $X$ is defined over a number field $K$ and $\infty\in X(K)$, then the absolute Galois group $G_K$ of $K$ acts on $X[N]$ and on $J[N]$; thus Theorem~\ref{theo:intromain} implies a bound on the degree of the extension $K(X[N])/K$ and the presence of a point of order $N$ on $X$ puts restrictions on the image of the natural representation of $G_K$ in the automorphism group of $J[N]$. Although we do not persue this here, it was this problem that led to the author's interest in the subject.   

\begin{rem} When $X$ is hyperelliptic and the base point $\infty$ a Weierstrass point, the function $\Delta_N$
 of the paper is not the same as the polynomial $\Delta_N$ defined in \cite{Bo23}. In fact, if $y^2+Q(x)y=P(x)$ is an equation for $X$, then the $\Delta_N$ of \cite{Bo23} is the product of our $\Delta_N$ and a power of $2y+Q(x)$, as follows from easily from Lemma~2.1 and the definitions of \cite{Bo23}. Since $2y+Q(x)$ vanishes precisely at the finite ramification points of the morphism $X\to \bP^1$ defined by $x$, the two definitions can be viewed as equivalent for the purposes of this paper.  For example, their vanishing is independent of the definition used. Similar remarks apply to other notation; for example the matrix $\cM_N(X)$ of this paper is not the same as the matrix $M_N(X)$.
\end{rem}

\section{Lower bounds on valuations at ramified points} \label{sec:vals}

We return to the notation of the Introduction. For the moment, we allow $d$ to be \emph{any} positive integer such that there exists a function $x\in k(X)$ finite away from $\infty$ and with a pole of order $d$ at $\infty$, and such that the field extension $k(X)/k(x)$ is separable. Since $g\geq 1$, we have $d\geq 2$. Let $\pi:X\to \bP^1$ be the morphism defined by $x$ and $\cR$ the set of ramification points of $\pi$.  Of course $d$ is also the degree of $\pi$. Note that $\infty\in \cR$. Set $\rho=\Card{\cR}$ and denote by $e$ (or $e_\xi$  if we need to refer explicitly to $\xi$) the ramification degree of $\pi$ at $\xi$. If $\xi\in \cR$, denote by $t=t_\xi$ a uniformizer at $\xi$. When $\xi\neq \infty$, $x$ can be viewed as an element of the completed local ring $\hat{ \cO}_{X,\xi}$ and has an expansion
\begin{equation} \label{eq:xdeltxi}
x=x(\xi)+\sum_{m\geq e}b_mt^m,  \quad   b_m\in k,{\ } b_{e}\neq 0,
\end{equation}
For simplicity of notation, we assume that $b_e=1$. Similarly, $x^{-1}$ is finite at $\infty$, $e_\infty=d$ and there is an expansion
\begin{equation} \label{eq:xdeltinfty}
\frac{1}{x}=\sum_{m\geq d}b_mt^m,  \quad b_m\in k,{\ } b_d\neq 0,
\end{equation}
where again we assume that $b_d=1$.

Let $r=r_\xi$ be the smallest integer prime to $p$ such that $b_r\neq 0$. This exists, since $\pi$ is supposed separable, and $r$ is easily seen to be independent of the choice of the uniformizer $t$.  We have $r\geq e$, and $\pi$ is tamely or wildly ramified at $\xi$ according as to whether $r=e$ or $r>e$. Of course $\pi$ is always tamely ramified when $p=0$.

\begin{lemma} \label{lem:RH} We have
\case{i} 
\begin{equation} \label{eq:Hurwitz}
2(g+d-1)=\sum_{\xi\in \cR} (r_\xi-1)
\end{equation}
and
\case{ii} $\rho\leq 2(g+d-1)$.
\end{lemma}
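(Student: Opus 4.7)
The plan is to deduce (i) from the Riemann-Hurwitz formula applied to the separable degree-$d$ cover $\pi:X\to \bP^1$. Since the genus of $\bP^1$ is $0$, the formula reads $2g-2=-2d+\deg R$, where $R$ is the ramification divisor of $\pi$; hence $\deg R=2(g+d-1)$. The identity \eqref{eq:Hurwitz} then reduces to the local claim that $\mathrm{ord}_\xi(R)=r_\xi-1$ at each $\xi\in\cR$.

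To establish this, I would invoke the standard description of $R$ via differentials: if $u$ is a uniformizer of $\bP^1$ at $\pi(\xi)$ and $t$ a uniformizer of $X$ at $\xi$, then $\mathrm{ord}_\xi(R)$ equals the $t$-adic order of the derivative $d(\pi^*u)/dt$. For $\xi\neq\infty$, take $u=x-x(\xi)$; differentiating the expansion \eqref{eq:xdeltxi} term by term yields
\begin{equation*}
\frac{d(\pi^*u)}{dt}=\sum_{m\geq e} m\,b_m\, t^{m-1}.
\end{equation*}
Since $mb_m\neq 0$ in $k$ exactly when $b_m\neq 0$ and $p\nmid m$, the smallest such $m$ is $r_\xi$ by definition, giving $\mathrm{ord}_\xi(R)=r_\xi-1$. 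The identical calculation with $u=1/x$ and the expansion \eqref{eq:xdeltinfty} handles $\xi=\infty$. This proves (i).

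For (ii), every $\xi\in\cR$ satisfies $e_\xi\geq 2$, and hence $r_\xi\geq e_\xi\geq 2$, so each term on the right of \eqref{eq:Hurwitz} is at least $1$; summing over the $\rho$ ramification points yields $\rho\leq 2(g+d-1)$.

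I do not expect a substantive obstacle. The only mildly delicate point is the justification of the local formula $\mathrm{ord}_\xi(R)=\mathrm{ord}_\xi(d(\pi^*u)/dt)$, which is classical and whose validity in positive characteristic is precisely what motivates introducing $r_\xi$ in the paragraph preceding the lemma. In the tame case $r_\xi=e_\xi$ one recovers the familiar contribution $e_\xi-1$.
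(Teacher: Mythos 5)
Your argument is correct and is essentially the same as the paper's: both deduce (i) by applying Riemann--Hurwitz to the separable cover $\pi$ and identifying the local ramification contribution as the order of vanishing of $d(\pi^*u)/dt$, which equals $r_\xi-1$ by the defining property of $r_\xi$ (paper works directly with $\pi^*\diff{x}$, you with $d(\pi^*u)/dt$ — an immaterial difference). Part (ii) is handled identically via $r_\xi\geq e_\xi\geq 2$.
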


\begin{proof} \case{i} Using (\ref{eq:xdeltxi}), we find, if $\xi\in \cR$ and $\xi\neq \infty$ and viewing $x$ as a function on $\bP^1$:
\begin{equation*}
\pi^*\diff{x}=\sum_{m\geq r}mb_mt^{m-1}\, \diff{t}= \big(rb_rt^{r-1}+O(t^r)\big)\,\diff{t}
\end{equation*}
and $r b_{r}\neq 0$. Similarly, when $\xi=\infty$ and using (\ref{eq:xdeltinfty}):
\begin{equation*}
\pi^*\diff{x}=-\pi^*x^2\diff{\Big(\frac{1}{x}\Big)}=\big(-rb_rt^{r-1-2d}+O(t^{r-2d})\big) \diff{t}=-t^{-2d}\big(rb_rt^{r-1}+O(t^{r})\big) \diff{t}, 
\end{equation*}
and again $rb_r\neq 0$.

But $\diff{x}$ is a canonical divisor on $\bP^1$ with divisor $-2[\pi(\infty)]$, $\pi$ has degree $d$ and  $\pi^*(-2[\pi(\infty)])=-2d[\infty]$. Since $\bP^1$ has genus $0$, it follows that (\ref{eq:Hurwitz}) is just the Hurwitz formula applied to $\pi$ (see for example \cite{Liu02}, \S~7.4.2).

\case{ii} Since $r_\xi\geq e_\xi\geq 2$ for all $\xi\in \cR$, $\rho\leq \sum_{\xi\in \cR}(r_\xi-1)$ and the bound follows from \case{i}.
\end{proof}

Now let $D=(D_n)_{n\geq 0}$ be the standard Hasse-Schmidt derivative on $k(x)$ trivial on $k$, so each $D_n$ is $k$-linear and $D_n(x^m)=\binom{m}{n}x^{m-n}$ for all $m\in \bZ$ and $n\geq 0$. Furthermore, $D$ has the crucial translation-invariance property
\begin{equation*}
D_n(x-x_0)^m=\binom{m}{n}(x-x_0)^{m-n}
\end{equation*}
for all $x_0\in k$, $m\in \bZ$, $n\geq 0$.  This implies that $D$ has a unique extension to the fraction field $K_{\bP^1,x_0}$ of each completed local ring $\hat{\cO}_{\bP^1,x_0}$, continuous for the $M$-adic topology associated to the maximal ideal $M$, and leaving invariant $\hat{\cO}_{\bP^1,x_0}$ invariant. 

If $D=(D_n)_{n\geq 0}$ is any Hasse-Schmidt derivation, we have the formula
\begin{equation*}
D_n(fg)=\sum_{\ell=0}^nD_\ell(f)D_{n-\ell}(g)
\end{equation*}
for all $n\geq 0$. We deduce that for all $n\geq 0$ and $m\geq 1$ we have
\begin{equation} \label{eq:Dnfm}
D_n(f^m)=\sum_{\ell_1+\ell_2+\cdots +\ell_m=n}\prod_{i=1}^mD_{\ell_i}(f)
\end{equation}
where the sum runs over all $m$-tuples of non-negative integers $(\ell_1,\ell_2,\dots, \ell_m)$ whose sum is $n$.  When $n=1$, this reduces to $D_1(f^m)=mf^{m-1}D_1(f)$ which also holds when $m<0$ as one sees by applying $D_1$ to the formula $f^{-m}f^m=1$. 

\begin{lemma} \label{lem:extD}
Let $U=X-\cR$ and let $\xi\in X$.

\case{i} $D$ has unique extensions to $k(X)$ and to the fraction field $K_{X,\xi}$ of the completed local ring $\hat{\cO}_{X,\xi}$ and these extensions are compatible with the field inclusions  $k(x)\subseteq k(X)\subseteq K_{X,\xi}$ and $k(x)\subseteq K_{\bP^1,\pi(\xi)}\subseteq K_{X,\xi}$ induced by $\pi$.

\case{ii} If $\xi\in U$, then $D$ leaves $\cO_X(U)$ and $\hat{\cO}_{X,\xi}$ invariant.
\end{lemma}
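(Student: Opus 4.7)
For part (i), the plan is to invoke the standard existence-and-uniqueness theorem for extensions of Hasse-Schmidt derivations along finite separable algebraic extensions. Since $k(X)/k(x)$ is finite separable by hypothesis, this yields a unique Hasse-Schmidt derivation on $k(X)$ extending $D$. For the completion, one first verifies that $K_{X,\xi}/K_{\bP^1,\pi(\xi)}$ is also finite separable: the $K_{\bP^1,\pi(\xi)}$-algebra $K_{\bP^1,\pi(\xi)}\otimes_{k(x)}k(X)$ is étale by base change, and its decomposition $\prod_{\pi(\xi')=\pi(\xi)}K_{X,\xi'}$ realizes $K_{X,\xi}$ as one of those (separable) factors. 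The extension theorem then produces a unique Hasse-Schmidt derivation on $K_{X,\xi}$ extending the extension of $D$ to $K_{\bP^1,\pi(\xi)}$ recorded before the lemma.

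Compatibility with the two field inclusions will be immediate from uniqueness. The restriction of the $K_{X,\xi}$-level derivation to $k(X)\subseteq K_{X,\xi}$ is a Hasse-Schmidt derivation on $k(X)$ extending $D$ on $k(x)$, hence coincides with the extension built directly on $k(X)$; it restricts to the given derivation on $K_{\bP^1,\pi(\xi)}$ by construction.

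For part (ii), $\xi\in U$ forces $e_\xi=1$ (and automatically $\xi\neq\infty$, since $\infty\in\cR$), so by \eqref{eq:xdeltxi} the function $x-x(\xi)$ is itself a uniformizer at $\xi$. Hence the inclusion $\hat{\cO}_{\bP^1,\pi(\xi)}\hookrightarrow\hat{\cO}_{X,\xi}$ is an isomorphism of complete discrete valuation rings, and invariance of $\hat{\cO}_{X,\xi}$ under $D$ reduces to the invariance of $\hat{\cO}_{\bP^1,\pi(\xi)}$ already established in the discussion preceding the lemma. For $\cO_X(U)$, I would use the identity $\cO_X(U)=\bigcap_{\xi\in U}\bigl(k(X)\cap\hat{\cO}_{X,\xi}\bigr)$ inside $k(X)$: since $D$ preserves $k(X)$ globally and each $\hat{\cO}_{X,\xi}$ locally, with the two actions compatible by part~(i), it preserves their intersection.

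The only genuine subtlety will be the separability of the possibly wildly ramified local extension $K_{X,\xi}/K_{\bP^1,\pi(\xi)}$ --- one might na\"ively worry about failure when $p\mid e_\xi$ --- but this is automatic from the étale-base-change argument above, and is the step that makes the overall picture go through uniformly in arbitrary characteristic.
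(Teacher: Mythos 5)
Your proposal is essentially correct and follows the same overall route as the paper, but the treatment of part (i) is packaged differently. Where you invoke a black-box ``standard existence-and-uniqueness theorem for extensions of Hasse-Schmidt derivations along finite separable algebraic extensions,'' the paper proves this from scratch: existence comes from observing that the extension of $D_1$ to $k(X)$ satisfies $D_1^p=0$ (a derivation trivial on $k(x)$ is trivial on the separable extension $k(X)$) together with Matsumura~\S27, and uniqueness is established by hand via the minimal polynomial $F(x,T)=\sum a_m(x)T^m$ of $f$ over $k(x)$, applying $D_n$ to $\sum a_m(x)f^m=0$ and inducting on $n$ using the identity~(\ref{eq:primedsum}). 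So your proposal is not incorrect, but it substitutes an appeal to a general theorem for the explicit argument that the paper supplies; the paper's version is more self-contained and makes clear exactly why separability is the operative hypothesis. Conversely, you spell out the separability of $K_{X,\xi}/K_{\bP^1,\pi(\xi)}$ via the \'etale base-change decomposition $K_{\bP^1,\pi(\xi)}\otimes_{k(x)}k(X)\cong\prod_{\xi'\mapsto\pi(\xi)}K_{X,\xi'}$, a step the paper simply asserts (``this is necessarily a separable extension''); that is a genuine improvement in rigor. Your part (ii) coincides with the paper's: unramifiedness plus $k$ algebraically closed gives $\hat{\cO}_{\bP^1,\pi(\xi)}=\hat{\cO}_{X,\xi}$, and the global statement follows from $\cO_X(U)=\bigcap_{\xi\in U}\bigl(k(X)\cap\hat{\cO}_{X,\xi}\bigr)$.
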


\begin{proof}
\case{i} Since $D_1x=1$, the extension of the usual derivation $D_1$ to $k(X)$ satisfies $D_1^p=0$, so $D$ extends to a Hasse-Schmidt derivative on $k(X)$ (see \cite{Mats86}, \S~27).  Similarly, $D$ extends to $K_{X,\xi}$ since this is necessarily a separable extension of $K_{\bP^1,\pi(\xi)}$. To see that it is unique, let $f\in k(X)$ and let $F(x,T)=\sum_{m=0}^Ma_m(x)T^m$ be the minimal polynomial of $f$ over $k(x)$, where $a_M(x)=1$.  Note that $\frac{\partial F}{\partial T}\neq 0$ since $k(X)/k(x)$ is separable. Thus $\sum_{m=0}^Ma_m(x)f^m=0$ and since $D_1$ is a derivation a standard computation shows there is only one possible formula for $D_1(f)$.  To deal with higher values of $n$, note that (\ref{eq:Dnfm}) can be rewritten as
\begin{equation} \label{eq:primedsum}
D_n(f^m)=mf^{m-1}D_n(f)+ {\sum}'_{(\ell)}\prod_{i=1}^mD_{\ell_i}(f)
\end{equation}
where the primed sum is now over those $(\ell)= (\ell_1,\ell_2,\dots, \ell_m)$ for which $\ell_1+\ell_2+\cdots +\ell_m=n$ and $\ell_i\neq 0$ for at least two indices $i$.  Then $\ell_i\leq n-1$ for all $i$.  Applying this to $D_n\big(\sum_{m=0}^Ma_m(x)f^m\big)=0$ shows by induction on $n$ that there is only one possible formula for $D_n(f)$. 

The same proof shows the uniqueness of the extension of $D$ from $K_{\bP^1,\pi(x)}$ to $K_{X,\xi}$, and this implies compatibility with the field inclusions. 

\case{ii} Since $X$ is smooth, $\cO_X(U)$ is integrally closed, $\cO_X(U)=\bigcap_{\xi\in U} \cO_{X,x}$ and $\cO_{X,x}=k(X)\cap \hat{\cO}_{X,\xi}$ for all $\xi\in U$. Therefore it suffices to show that $D$ leaves $\hat{\cO}_{X,\xi}$ invariant.  Since $\xi\in U$, $\pi$ is unramified at $\xi$ and, since $k$ is algebraically closed, the inclusion $\hat{\cO}_{\bP^1,\pi(\xi)}\subseteq \hat{\cO}_{X,\xi}$ is an equality.  But we have already noted that $\hat{\cO}_{\bP^1,\pi(\xi)}$ is $D$-invariant. 
\end{proof}

Since $X$ is smooth, $\cO_{X,\xi}$ is a discrete valuation ring for all $\xi\in X$ and we write $v_\xi$ (or just $v$ if there is no risk of confusion) for the associated valuation. Thus $v_\xi(t_\xi)=1$.

\begin{prop} \label{prop:valDnf} Let $f\in \cO_X(U)$. Then $D_n(f)\in \cO_X(U)$ for all $n\geq 1$ and, if $\xi\in \cR$, then
\begin{equation*}
v_\xi(D_n(f))\geq \begin{cases} (n-1)e_\xi-(2n-1)r_\xi+v_\xi(f) &\text{ when $\xi\neq \infty$}\\
(3n-1)d-(2n-1)r_\infty+v_\infty(f) &\text{ when $\xi=\infty$} \end{cases}
\end{equation*}
for all $n\geq 1$.
\end{prop}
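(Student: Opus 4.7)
The first claim, $D_n(f) \in \cO_X(U)$, follows immediately from Lemma~\ref{lem:extD}(ii), which asserts that the Hasse--Schmidt derivation $D = (D_n)_{n \geq 0}$ leaves $\cO_X(U)$ invariant. For the valuation bound at $\xi \in \cR$, we work in the completed local ring $\hat{\cO}_{X,\xi} \cong k[[t]]$, where $D$ extends uniquely by Lemma~\ref{lem:extD}(i). The plan is first to bound the sequence $\Phi_n := D_n(t)$ by induction on $n$, then to extend to $f = t^m$ for every $m \in \bZ$ via (\ref{eq:Dnfm}), and finally to extend to arbitrary $f$ by $k$-linearity and $t$-adic continuity of $D_n$.

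For $\xi \neq \infty$, set $\phi(t) := x - x(\xi)$. Since $D_n(x) = \binom{1}{n} x^{1-n}$ equals $1$ for $n = 1$ and $0$ for $n \geq 2$, applying (\ref{eq:Dnfm}) termwise to $\phi(t) = \sum_{m \geq e} b_m t^m$ and isolating the compositions with a single nonzero $\ell_i = n$ yields, for $n \geq 2$, the recursion
\begin{equation*}
  \phi'(t)\,\Phi_n \;=\; -\sum_{k=2}^{n}\phi^{(k)}(t)\sum_{\substack{(a_1,\ldots,a_k)\\ a_i\geq 1,\, \sum a_i = n}}\Phi_{a_1}\cdots\Phi_{a_k},
\end{equation*}
where $\phi^{(k)}(t):=\sum_m b_m\binom{m}{k}t^{m-k}$ has $v_t\geq e-k$, and $v_t(\phi'(t))=r-1$ (the leading surviving term of $\sum_m mb_m t^{m-1}$ being $rb_r t^{r-1}$, by the definition of $r$). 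The case $n=1$ gives $\Phi_1=1/\phi'(t)$, hence $v_t(\Phi_1)=1-r$, matching the claim. For $n\geq 2$, substituting the inductive bound $v_t(\Phi_{a_i})\geq 1+(a_i-1)e-(2a_i-1)r$ into the $k$-th summand gives
\begin{equation*}
  v_t\bigl(\phi^{(k)}(t)\Phi_{a_1}\cdots\Phi_{a_k}\bigr)\;\geq\;(n-k+1)e-(2n-k)r;
\end{equation*}
since $r\geq e$, this is minimized over $k\geq 2$ at $k=2$, giving $(n-1)e-(2n-2)r$, and dividing by $\phi'(t)$ yields $v_t(\Phi_n)\geq 1+(n-1)e-(2n-1)r$.

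With $\Phi_n$ bounded, applying (\ref{eq:Dnfm}) to $t^m$ for $m\geq 1$ and estimating in the same way gives $v_t(D_n(t^m))\geq m+(n-1)e-(2n-1)r$, with the minimum over the number $k$ of nonzero parts of the composition now attained at $k=1$. For $m<0$, the identity $D_n(t^m\cdot t^{-m})=0$ gives a recursion expressing $D_n(t^{-1})$ in terms of $\Phi_i$ and $D_j(t^{-1})$ with $j<n$, and induction on both $n$ and $|m|$ yields the uniform bound for all $m\in\bZ$. Writing any $f\in K_{X,\xi}$ as $\sum_m c_m t^m$ then gives the claim for general $f$. The case $\xi=\infty$ follows the same outline with $y:=1/x$ in place of $\phi(t)$, except that the analogous recursion has the extra nonzero term $(-1)^n y^{n+1}$ of $t$-valuation $(n+1)d$ on the right-hand side, coming from $D_n(x^{-1})=(-1)^n x^{-(n+1)}$. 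The inequality $r_\infty\geq d$ ensures that both this extra term and the $k\geq 3$ terms have $v_t$ at least as large as the $k=2$ contribution $(3n-1)d-(2n-2)r_\infty$, so the minimum is still attained at $k=2$ and one obtains $v_t(\Phi_n)\geq 1+(3n-1)d-(2n-1)r_\infty$.

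The main technical obstacle is the inductive step for $\Phi_n$, specifically the verification that the minimum of the inductive estimate over $k$ is indeed attained at $k=2$. This is where the inequalities $r\geq e$ and $r_\infty\geq d$ enter crucially, and where the tame/wild dichotomy at $\xi$ ultimately manifests itself in the coefficients of the final bound.
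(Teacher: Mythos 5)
Your proof is correct and follows essentially the same approach as the paper: an induction on $n$ driven by applying the Hasse--Schmidt product rule to the expansion of $x-x(\xi)$ (resp.\ $1/x$) in powers of $t$, combined with the bounds $r\geq e$ (resp.\ $r_\infty\geq d$), and then the reductions $t\rightsquigarrow t^m\rightsquigarrow$ general $f$. The only difference is organizational --- you group terms by the number $k$ of nonzero parts and absorb the sum over $m$ into the power series $\phi^{(k)}(t)=\sum_m b_m\binom{m}{k}t^{m-k}$, whereas the paper tracks the number $s=m-k$ of zero parts and estimates term-by-term over $m\geq e$ --- which is equivalent but makes the monotonicity-in-$k$ step a touch more transparent.
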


\begin{proof} The first assertion follows from Lemma~\ref{lem:extD}, so we concentrate on the proof of the lower bounds. The case $f=0$ is trivial, so we suppose throughout that $f\neq 0$. Let $v(f)=m$, so $v(f)=v(t^m)$. Since the bounds are strictly increasing functions of $v(f)$, it suffices to prove them when $f=t^m$.   When $m=0$, they are trivial since $D_n(1)=0$ for all $n\geq 1$. We consider successively the cases $m=1$, $m\geq 2$ and $m<0$, dealing first with the case $\xi\neq \infty$. 

\case{a} The case $m=1$, $\xi\neq \infty$.  Applying $D_1$ to $x=\sum_{m\geq e}b_mt^m$ gives $1=D_1(t)(rb_rt^{r-1}+O(t^r))$. Since $v(1)=0$, we deduce that $v(D_1(t))=-v(rb_rt^{r-1})=1-r$ since $rb_r\neq 0$. Hence equality holds when $n=1$. 

Suppose now that $n\geq 2$ and that the result holds when $n$ is replaced by an integer $\ell$ such that $1\leq \ell<n$. Now
 \begin{align*}
 0=D_n(x-x(\xi))&=\sum_{m\geq e}b_mD_n(t^m)\\
                       &=\sum_{m\geq e}b_m\Big(\sum_{\ell_1+\ell_2+\cdots +\ell_m=n}\prod_{i=1}^mD_{\ell_i}(t)\Big)\\
                       &=\sum_{m\geq e}mb_mD_n(t)t^{m-1}+\sum_{m\geq e}b_m\Big({\sum_{(\ell)}}'\prod_{i=1}^mD_{\ell_i}(t)\Big)\\
                       &=D_n(t)(rb_rt^{r-1}+O(t^r))+\sum_{m\geq e}b_m\Big({\sum_{(\ell)}}'\prod_{i=1}^mD_{\ell_i}(t)\Big).
  \end{align*}
where the primed sum is as in (\ref{eq:primedsum}).  We now estimate $v\Big(\prod_{i=1}^mD_{\ell_i}(t)\Big)$ for each of the terms inside the primed sum. Fix one such term and let $s$ be the number of indices $i$ such that $\ell_i=0$. Then since $v(t)=1$, $v(D_{\ell_i(t)})\geq 1+(\ell_i-1)e-(2\ell_i-1)r$ and $\sum_{i}\ell_i=n$, we have 
\begin{align*}
v\Big(\prod_{i=1}^mD_{\ell_i}(t)\Big)=\sum_{i=1}^mv(D_{\ell_i}(t))&\geq s+\sum_{i\mid \ell_i\geq 1}(1+(\ell_i-1)e-(2\ell_i-1)r)\\
                                                                                                                  &= s+(m-s)+(n-m+s)e-(2n-m+s)r\\
                                                                                                                  &= m+ne-2nr+(m-s)(r-e).
\end{align*}
This is true for all $m\geq e$ and the smallest value that occurs is when $m=e$. Note that since no $\ell_i$ is equal to $n$, $s\geq m-2$ so $m-s\geq 2$. Also, $r\geq e$. Hence for all $m\geq e$:
\begin{equation*}
v\Big(\prod_{i=1}^mD_{\ell_i}(t)\Big)\geq e+ne-2nr+2(r-e)=(n-1)e-(2n-2)r.
\end{equation*}  
It follows that $v(D_n(t)rb_rt^{r-1})\geq (n-1)e-(2n-2)r$ and therefore
\begin{equation*}
v(D_n(t))\geq (n-1)e-(2n-2)r+1-r=1+(n-1)e-(2n-1)r
\end{equation*}
as required.  

\case{b} The case $m>1$, $\xi\neq \infty$. Then, using the same notation and argument as in \case{a} we find
\begin{equation*}
D_n(t^m)=\sum_{(\ell)}\prod_{i=1}^mD_{\ell_i}(t)
\end{equation*}
and so, for any $(\ell)=(\ell_1,\ell_2,\dots, \ell_m)$,
\begin{equation*}\sum_{i=1}^m{v(D_{\ell_i}(t))}\geq m+ne-2nr+(m-s)(r-e).
\end{equation*}
The minimal estimate occurs when $m-s$ is as small as possible, which happens when $s=m-1$. Then $m-s=1$ and the result follows.

\case{c} The case $m<0$, $\xi\neq \infty$. Put $q=-m$, so $q>0$ and $m=-q$. From $t^{-q}t^{q}=1$ and $n\geq 1$ we deduce $D_n(t^{-q}t^q)=0$, so that $\sum_{\ell=0}^nD_{\ell}(t^{-q})D_{q-\ell}(t^q)=0$, whence $D_n(t^{-q})=-t^{-q}\sum_{\ell=0}^{n-1}D_{\ell}(t^{-q})D_{n-\ell}(t^q)$.  When $n=1$, this reduces to $D_1(t^{-q})=-t^{-2q}D_1(t^q)$, so since $v(t)=1$ and $v(D_1(t^q))\geq q-r$, we find $v(D_1(t^{-q}))\geq -2q+q-r=-q-r$ as required. Equality holds when $q=n=1$ since  $v(D_1(t))=1-r$. Suppose that $n\geq 2$ and that the result is true when $n$ is replaced by any integer $\ell<n$. Then, if further $\ell\geq 1$,
\begin{align*}
v(D_{\ell}(t^{-q})D_{n-\ell}(q))&\geq(-q+(\ell-1)e-(2\ell-1)r)+(q+(n-\ell-1)e-(2n-2\ell-1)r\\
                                                   &=(n-2)e-(2n-2)r
\end{align*}
while
\begin{equation*}
v(t^{-q}D_n(t^q))\geq -q+q+(n-1)e-(2n-1)r=(n-1)e-(2n-1)r.
\end{equation*}
Since $r\geq e$, the term $v(t^{-q}D_n(t^q))$ contributes the smallest lower bound and we deduce that $v(t^qD_n(t^{-q}))\geq (n-1)e-(2n-1)r$ as claimed.

\case{d} The case $\xi=\infty$ (and all $m$). The argument is basically the same as when $\xi\neq \infty$. Recall that $v_\infty(\frac{1}{x})=d$. From 
\begin{equation*}
-\frac{1}{x^2}=D_1\Big(\frac{1}{x}\Big)=\sum_{m\geq d}D_1(b_mt^{m})=D_1(t)\sum_{m\geq d}b_mmt^{m-1}=D_1(t)(b_rrt^{r-1}+O(t^r)),
\end{equation*}
we find that $v_\infty(D_1(t))=2d-r+1$.  To do the case $m=1$ and $n\geq 2$, we start with
\begin{align*}
(-1)^n\frac{1}{x^{n+1}}&=D_n\Big(\frac{1}{x}\Big)=\sum_{m\geq d}b_mD_n(t^m)\\
                                   &=D_n(t)(rb_rt^{r-1}+O(t^r))+\sum_{m\geq d}b_m\Big({\sum_{(\ell)}}'\prod_{i=1}^mD_{\ell_i}(t)\Big).
\end{align*}
Thus a lower bound for $v_\infty(t^{r-1}D_n(t))$ is given by the minimum of  $v_\infty(x^{-n-1})=(n+1)d$ and $\sum_{i=1}^mv_\infty(D_{\ell_i}(t))$ for $(\ell)=(\ell_1,\ell_2,\dots  ,\ell_m)$ as in the primed sum.  Fix such an $(\ell)$ and let $s$ be the number of indices $i$ such that $\ell_i=0$. Then
\begin{align*}
\sum_{i=1}^mv_\infty(D_{\ell_i}(t))&\geq s+\sum_{i\mid \ell_i\geq 1}(3\ell_i-1)d-(2\ell_i-1)r+1)\\
                                                     &=s+(3n-m+s)d-(2n-m+s)r+(m-s)\\
                                                     &=m+(m-s)(r-d)+3nd-2nr. 
\end{align*}
Hence, since $0\leq s\leq m-2$, $m\geq d$ and $r\geq d$:
\begin{align*}
v_\infty(D_n(t))&\geq \min{((n+1)d-r+1, d+2(r-d)+3nd-2nr-r+1)}\\
                        &=\min{((n+1)d-r+1, (3n-1)d-(2n-1)r+1)}.
\end{align*}
Since $r\geq d$, the minimum is attained at $(3n-1)d-(2n-1)r+1$. This completes the proof when $m=1$ and $n\geq 1$.

The cases $m\geq 2$ and $m<0$ then follow as before. \end{proof}

\section{The matrix $\cM_N$} \label{sec:matrixMn}

We continue to use the notation introduced in the previous section.  Let $\delta_1<\delta_2\cdots  <\delta_n<\cdots$ be the strictly increasing sequence orders of poles at $\infty$ of functions in $\cO_X(X-\{\infty\})$. Thus $n=\dim_k(\cO_X(m[\infty]))$ whenever $\delta_n\leq m< \delta_{n+1}$. Since $1\in \cO_X(X-\{\infty\})$, we have $\delta_1=0$. By the Riemann-Roch theorem, $\delta_n=n+g-1$ for all $n\geq g+1$, so $\delta_{n+1}=\delta_n+1$ for these $n$. The set $\{\delta_n\}_{n\geq 1}$ form a sub-semigroup of the additive semigroup $\bZ_+$. The positive integers not of the form $\delta_n$ for some $n$ are known as the Weierstrass gaps at $\infty$, it is well-known that there are always $g$ of them.

Fix $N\geq 2$ as in the Introduction. We define $\sD_N=\dim_{k}(\cO_X(N[\infty]))$, so $\sD_N$ is the unique integer $n$ such that $\delta_n\leq N<\delta_{n+1}$. By the Riemann-Roch theorem, $0\leq N-\sD_N\leq g-1$ for all $N$, and $\sD_N=N-g+1$ when $N\geq 2g-1$. 

For each $n$, we choose a function $y_n\in \cO_X(X-\{\infty\})$ such that $v_\infty(y_n)=-\delta_n$.  Once such a choice is made, we define a matrix $\cM_N=\cM_N(X)$ with $\sD_N$ rows and $N$ columns and entries in $k(X)$ by
\begin{equation*}
\big(\cM_N(X)\big)_{i,j}=D_{j-1}(y_i), \qquad 1\leq i\leq \sD_N, \quad 1\leq j\leq N.
\end{equation*}

In general, if $\cM$ is a matrix and $r\in \bZ_{\geq 0}$, we denote by $\cM^{(r)}$ the matrix obtained by suppressing the rightmost $r$ columns of $\cM$. In particular, $\cM^{(0)}=\cM$. 

Recall that $U=X-\cR$. By Proposition~\ref{prop:valDnf}, the entries of $\cM_N$ actually lie in $\cO_X(U)$. If $\xi\in U$, we denote by $\cM_N(\xi)$ the matrix obtained by evaluating each entry of $\cM_N$ at $\xi$. Similarly for $\cM_N^{(r)}(\xi)$. In what follows, we suppose that $0\leq r\leq N-\sD_N$. Since $\sD_N\leq N-r$, the rank of $\cM_N^{(r)}$ is at most $\sD_N$, and this is also the maximal possible value of the rank $\cM_N^{(r)}(\xi)$ when  $\xi\in U$. Note also that $\cM_N^{(N-\sD_N)}$ is a square matrix. The following Proposition is central to all that follows.

\begin{prop}  \label{prop:crucial} Let $\xi\in U$ and let $r\in \{0,1,\dots, N-\sD_N\}$. Then the following are equivalent:

\case{a} $(N-r)(\xi)\in  W^-_r$,

\case{b} $\cM_N^{(r)}(\xi)$ has rank strictly less than $\sD_N$.
\end{prop}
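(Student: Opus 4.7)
The strategy is to translate both conditions into statements about functions in $\cO_X(N[\infty])$ vanishing to high order at $\xi$, and then match them via the Hasse--Schmidt formalism at the unramified point $\xi$.

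First I would reformulate (a). The assertion $(N-r)(\xi)\in W_r^-$ means that $(N-r)\xi = -(\xi_1+\cdots+\xi_r)$ in $J$ for some $\xi_1,\dots,\xi_r\in X$. Because the embedding of $X$ in $J$ uses $\infty$ as base point, this is equivalent to the linear equivalence
\begin{equation*}
(N-r)[\xi]+[\xi_1]+\cdots+[\xi_r]\sim N[\infty].
\end{equation*}
Thus (a) holds if and only if there exists a nonzero function $f\in k(X)$ whose divisor has the form $N[\infty]-(N-r)[\xi]-\sum_{i=1}^r[\xi_i]$ for some $\xi_i\in X$. A short degree argument in both directions shows this in turn is equivalent to the purely local statement
\begin{equation*}
\text{there exists a nonzero } f\in \cO_X(N[\infty]) \text{ with } v_\xi(f)\geq N-r.
\end{equation*}
(If such an $f$ exists, let $m\leq N$ be its pole order at $\infty$; its remaining zeros have total degree $m-(N-r)\leq r$, and one rounds out to $r$ points by adding $\xi$ with appropriate multiplicity and $\infty$ with multiplicity $N-m$.)

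Next I would reformulate (b). Because the $y_i$ were chosen with distinct pole orders $\delta_i$ at $\infty$, $(y_1,\dots,y_{\sD_N})$ is a basis of $\cO_X(N[\infty])$. A vector $(c_1,\dots,c_{\sD_N})\in k^{\sD_N}$ lies in the left kernel of $\cM_N^{(r)}(\xi)$ iff $f=\sum_i c_i y_i$ satisfies $D_{j-1}(f)(\xi)=0$ for $j=1,\dots,N-r$. Hence $\cM_N^{(r)}(\xi)$ has rank $<\sD_N$ iff there is a nonzero $f\in \cO_X(N[\infty])$ with $D_0(f)(\xi)=\cdots=D_{N-r-1}(f)(\xi)=0$.

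The bridge between the two reformulations is the interpretation of Hasse--Schmidt derivatives at an unramified point. Since $\xi\in U$, Lemma~\ref{lem:extD}\case{ii} gives $\hat{\cO}_{X,\xi}=\hat{\cO}_{\bP^1,\pi(\xi)}$, so $x-x(\xi)$ is a uniformizer at $\xi$. The translation invariance of $D$ yields $D_n\bigl((x-x(\xi))^m\bigr)=\binom{m}{n}(x-x(\xi))^{m-n}$; hence, if $f=\sum_{m\geq v}a_m(x-x(\xi))^m$ with $v=v_\xi(f)$, then $D_n(f)(\xi)=a_n$ for $n\geq v$ and $D_n(f)(\xi)=0$ for $n<v$. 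Consequently,
\begin{equation*}
v_\xi(f)\geq N-r \quad\Longleftrightarrow\quad D_0(f)(\xi)=\cdots=D_{N-r-1}(f)(\xi)=0.
\end{equation*}
Combining this with the two reformulations above yields the equivalence (a)$\Leftrightarrow$(b).

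The only genuinely delicate step is the equivalence of (a) with the existence of a nonzero $f\in \cO_X(N[\infty])$ having $v_\xi(f)\geq N-r$; once this is in hand, the rest is bookkeeping. I would expect this step to require the degree-counting argument sketched above, distinguishing cases depending on whether any of the auxiliary points $\xi_i$ coincides with $\xi$ or with $\infty$, but no result beyond the definition of the Abel--Jacobi embedding and Riemann--Roch.
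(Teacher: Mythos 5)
Your proposal is correct and follows essentially the same route as the paper: apply Abel--Jacobi to reduce $(N-r)\xi\in W_r^-$ to the existence of a function in $\cO_X(N[\infty])$ vanishing to order $\geq N-r$ at $\xi$, expand such a function in the basis $(y_i)$, and use the fact that at the unramified point $\xi$ the condition $v_\xi(f)\geq N-r$ is detected exactly by the vanishing of $D_0(f),\dots,D_{N-r-1}(f)$. One small slip to fix: in your reformulation of (a) the divisor should be $(N-r)[\xi]+\sum_{i=1}^r[\xi_i]-N[\infty]$ rather than its negative, so that the function in question actually lies in $\cO_X(N[\infty])$ and has $v_\xi\geq N-r$ (your subsequent degree-counting in fact uses this corrected sign).
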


\begin{proof} By the Abel-Jacobi theorem, $(N-r)\xi\in W^-_r$ if and only the divisor $(N-r)([\xi]-[\infty])$ is linearly equivalent to a divisor of the form $-(\sum_{i=1}^r[\eta_i]-r[\infty])$, in other words if and only if there exists $\alpha\in k(X)$ with divisor $(N-r)[\xi]+\sum_i[\eta_i]-N[\infty]$.  Such an $\alpha$ lies in $\cO_X(N[\infty])$, so can be written $\alpha=\sum_{i=1}^{\sD_N}\lambda_iy_i$ for some $\lambda_i\in k$ not all $0$. On the other hand, since $\xi\notin \cR$, $\alpha$ having a zero of order at least $N-r$ at $\xi$ is equivalent to $D_j(\alpha)$ vanishing at $\xi$ for all $j\in \{0,1,\dots, N-r-1\}$. Thus if $(N-r)\xi\in W^-_r$, there is a linear relation $\sum_{i}\lambda_iD_j(y_i)(\xi)=0$ among the rows of $\cM_N^{(r)}(\xi)$.  Conversely, if such a linear relation exists, and we define $\alpha=\sum_{i}\lambda_i y_i$, then $\alpha\in \cO_X(N[\infty])$ and it vanishes to order at least $N-r$ at $\xi$ ($\alpha$ is non-zero since the $y_i$'s are linearly independent). Since the degree of the divisor of a function is zero, the divisor of $\alpha$ must be of the form $(N-r)[\xi]+\sum_i[\eta_i]-N[\infty]$ and hence $(N-r)\xi\in W^-_r$. 
\end{proof}

\begin{cor} \label{cor:crucial}
Let $\xi\in U$. Then $\xi \in X[N]$ if and only if the rank of $\cM_N(\xi)$ is strictly less than $\sD_N$.
\end{cor}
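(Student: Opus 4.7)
The plan is to apply Proposition~\ref{prop:crucial} with $r=0$ and simply unwrap the resulting statement. When $r=0$, the truncated matrix $\cM_N^{(r)}(\xi)$ is exactly $\cM_N(\xi)$, so the rank condition on the right-hand side of the proposition reads verbatim as the rank condition in the corollary. It therefore suffices to check that the condition $(N-r)\xi \in W_r^-$ specializes, when $r=0$, to $\xi \in X[N]$.

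Recall from the Introduction that $W_r = \{\sum_{i=1}^r \xi_i \mid \xi_i \in X\}$ is regarded as a subvariety of $J$. For $r=0$ the empty sum convention gives $W_0 = \{0_J\}$, and since the $[-1]$-isogeny fixes the identity of $J$ we also have $W_0^- = \{0_J\}$. Hence $N\xi \in W_0^-$ is equivalent to $N\xi = 0$ in $J$; by definition of the embedding $X \hookrightarrow J$ (with $\infty$ as base point) and of $X[N] = J[N] \cap X$, this is precisely the statement $\xi \in X[N]$.

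Finally, the hypothesis $\xi \in U$ of the corollary matches that of Proposition~\ref{prop:crucial}, and $r=0$ is an admissible choice since $N - \sD_N \geq 0$. Both directions of the equivalence are thus immediate. There is essentially no obstacle here: the corollary is a direct specialization of Proposition~\ref{prop:crucial}, and its only content beyond that proposition is the harmless identification $W_0^- = \{0_J\}$ which turns the geometric condition into the torsion condition.
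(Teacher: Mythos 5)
Your proof is correct and is exactly the paper's argument: the paper's proof of the corollary simply observes it is the case $r=0$ of Proposition~\ref{prop:crucial}, which you have unwrapped carefully via the identification $W_0^- = \{0_J\}$.
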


\begin{proof}
This is just the case $r=0$ of the Proposition.
\end{proof}

\begin{cor} \label{cor:crucialgeneric}
The matrix $\cM_N$ has rank $\sD_N$.  Suppose that $r\in \{1,2,\dots, N-\sD_N\}$.  Then $\cM_N^{(r)}$ has rank $\sD_N$ if and only if $(N-r)_*X\not\subseteq W_r^-$.
\end{cor}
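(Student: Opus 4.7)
The plan is to deduce both assertions of Corollary~\ref{cor:crucialgeneric} from Proposition~\ref{prop:crucial} by translating the rank of $\cM_N^{(r)}$ over $k(X)$ into a statement about pointwise ranks at points of $U$.

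First I would observe that, by Lemma~\ref{lem:extD}(ii) and Proposition~\ref{prop:valDnf}, every entry of $\cM_N$ (and hence of each truncation $\cM_N^{(r)}$) lies in $\cO_X(U)$, so each $\sD_N \times \sD_N$ minor of $\cM_N^{(r)}$ is a regular function on $U$. Since $U$ is open and dense in the irreducible curve $X$, such a minor is the zero element of $k(X)$ if and only if it vanishes at every $\xi \in U$. The rank of $\cM_N^{(r)}$ over $k(X)$ is at most $\sD_N$ (there are only $\sD_N$ rows, and by the hypothesis $r \leq N-\sD_N$ there are at least $\sD_N$ columns), so the condition \textquotedblleft$\cM_N^{(r)}$ has rank strictly less than $\sD_N$ as a matrix over $k(X)$\textquotedblright\ is equivalent to \textquotedblleft $\cM_N^{(r)}(\xi)$ has rank strictly less than $\sD_N$ for every $\xi \in U$\textquotedblright.

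By Proposition~\ref{prop:crucial} applied at each $\xi \in U$, this last condition is equivalent to $(N-r)(\xi) \in W_r^-$ for all $\xi \in U$. Since $U$ is dense in $X$, since multiplication by $N-r$ on $J$ is a morphism, and since $W_r^-$ is Zariski closed in $J$, this is in turn equivalent to $(N-r)_* X \subseteq W_r^-$. Taking the contrapositive gives the second assertion. For the first assertion, the same argument applied with $r = 0$ reduces to verifying $N_* X \not\subseteq W_0^- = \{0\}$, which is immediate: $X$ is one-dimensional, whereas the containment would force its underlying set to sit inside the finite set $J[N](k)$, a contradiction.

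I do not foresee a real obstacle; all the substantive content is packaged in Proposition~\ref{prop:crucial}, and the Corollary is just the routine passage from pointwise rank to generic rank, combined with the observation that a morphism from the irreducible curve $X$ into $J$ factors through a closed subset if and only if it does so on a dense open subset.
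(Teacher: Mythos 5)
Your proposal is correct and follows essentially the same route as the paper: reduce the generic rank of $\cM_N^{(r)}$ to pointwise ranks at the infinitely many points of $U=X-\cR$, invoke Proposition~\ref{prop:crucial} at each such point, and conclude via the density of $U$ in $X$ (equivalently, the finiteness of $\cR$) together with the Zariski-closedness of $W_r^-$. The paper states the $r=0$ case as a contradiction with the finiteness of $X[N]\subseteq J[N]$, whereas you phrase it as $N_*X\subseteq\{0\}$ forcing $X\subseteq J[N]$; these are the same observation, and your treatment unifies the two parts of the corollary more cleanly than the paper's terse formulation.
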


\begin{proof}
Since $\cM_N$ has $\sD_N$ rows, its rank is at most $\sD_N$. If its rank were strictly smaller, then since $k$ is algebraically closed and $\cR$ is a finite set, it would follow from Corollary~\ref{cor:crucial} that $X[N]$ were an infinite set. But $X[N]\subseteq J[N]$, which is a finite set. Since $\cR$ is a finite set, the second statement follows from Proposition~\ref{prop:crucial}.  
\end{proof}

\section{Bounds on zero divisor degrees}

It follows from Proposition~\ref{cor:crucial} that points of $X[N]$ not belonging to $\cR$ correspond to common zeros of the $\sD_N\times \sD_N$ minors of $\cM_N$. We index these by their columns: 

\begin{notation} \label{def:minors}For each sequence $s=(s_1,s_2,\dots , s_{\sD_N})$ of $\sD_N$ integers such that $1\leq s_1<s_2<\cdots < s_{\sD_N}\leq N$, we denote by $\Pi_{N,s}$ the minor obtained from columns $s_1$, $s_2$, \dots, $s_{\sD_N}$ of $\cM_N$. We define $\Delta_N$ to be the leftmost minor ($s_i=i$ for all $i$). 
\end{notation}

In what follows, we shall allow ourselves to permute the lines of $\cM_N$, so the $\Pi_{N,s}$'s should be viewed as only defined up to sign. By the Corollary, we know that at least one of the $\Pi_{N,s}$'s is not identically zero. In this section we shall prove bounds on the number of zeros in $U$ of the $\Pi_{N,s}$'s under the assumption that they are not identically zero. 

The matrix $\cM_N$ depends on the choice of the functions $y_n$; we shall now explain how to choose them in a way to make the computations simpler.

We fix once and for all the uniformizer $t=t_\infty$ at $\infty$ and we always assume that 
\begin{equation*}
y_n=\frac{1}{t^{\delta_n}}+O\big(\frac{1}{t^{\delta_n-1}}\big)
\end{equation*}
for all $n$. This is obviously just a matter of rescaling. 

For each residue class $\gamma \pmod{d}$, we fix a choice $z_\gamma$ of $y_n$, where $n$ is the smallest integer such that $\delta_n\in \gamma$, and then take $y_{n+id}=x^iz_\gamma$ for all $i=0$, $1$, $2$, \dots {}.   We always assume that $z_0=1$. We set $N_\gamma=\Card{\{n\leq N \mid \delta_n\in \gamma\}}$. Thus $\sum_{\gamma\in \bZ/d\bZ}N_\gamma=\sD_N$.  If we extract the $n^{\text{th}}$ rows of $\cM_N$ with $n$ running over the integers up to $\sD_N$ such that $\delta_n\in \gamma$, we obtain a matrix with $N_\gamma$ rows
\begin{equation*}
\cN_{N,\gamma}=\begin{pmatrix} z & D_1(z) & D_2(z) & \cdots  &  D_{N-1}(z) \\
xz       &  D_1(xz) & D_2(xz)   &    \cdots        &  D_{N-1}(xz)          \\
x^2z   &D_1(x^2z)& D_2(x^2z) &  \cdots        & D_{N-1}(x^2z)       \\
\vdots &\vdots      &  \vdots      &   \ddots        &   \vdots                  \\
x^mz & D_1(x^mz)& D_2(x^mz)&  \cdots       &  D_{N-1}(x^mz)   
\end{pmatrix},
\end{equation*}
where $z=z_\gamma$ and $m=N_\gamma-1$. 

We define an $\sD_N\times N$ matrix $\cN_N$ by stacking the matrices $\cN_{N,0}$, $\cN_{N,1}$, \dots , $\cN_{N,d-1}$ each just above the next. Thus $\cN_N$ is obtained from $\cM_N$ by a permutation of the rows.

Next, let $\cN'_{N,\gamma}$ be the $N_\gamma\times N$ matrix defined by
\begin{equation*}
\cN'_{N,\gamma}=\begin{pmatrix} z & D_1(z) & D_2(z) & \cdots  & D_{N-1}(z) \\
                  0  &  z  &  D_1(z)  &  \cdots    &   D_{N-2}(z)   \\
                  0  &  0  &    z        &  \cdots     &   D_{N-3}(z)  \\
         \vdots & \vdots & \vdots &   \ddots   &    \vdots        \\
                 0   &   0  &   0        &   \cdots     &  D_{N-N_{\gamma}}(z)      
\end{pmatrix}. 
\end{equation*}
where again $z=z_\gamma$. (More formally, $\big(\cN'_{N,\gamma}\big)_{i,j}= D_{j-i}(z)$,  where $ 1\leq i\leq N_\gamma$, $1\leq j\leq N$,  and $D_{j-i}(z)=0$ if $j<i$.) Finally, we define $\cN'_N$ to be the $\sD_N\times N$ matrix obtained by stacking the matrices $\cN'_{N,\gamma}$ each above the next in the same way that $\cN_N$ was defined using the $\cN_{N,\gamma}$s.  For each sequence $s=(s_1,s_2,\dots , s_{\sD_N})$ with $1\leq s_1<s_2<\cdots < \sD_N\leq N$, denote by $\Pi'_{N,s}$ the minor of $\cN'_N$ defined by columns $s_1$,  $s_2$, \dots, $s_{\sD_N}$. 

\begin{lemma} \label{lem:PiPiprime}Up to sign, we have $\Pi_{N,s}=\Pi'_{N,s}$.
\end{lemma}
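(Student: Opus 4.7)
\emph{Proof strategy.} The key observation is that the rows of $\cN_{N,\gamma}$ can be written as $k(X)$-linear combinations of the rows of $\cN'_{N,\gamma}$ via a lower unitriangular matrix. I would begin by applying the Hasse-Schmidt Leibniz rule to the product $x^iz_\gamma$ and combining it with the formula $D_\ell(x^i)=\binom{i}{\ell}x^{i-\ell}$, which vanishes for $\ell>i$. Writing $R_i$ for the $(i+1)$st row of $\cN_{N,\gamma}$ and $R'_k$ for the $k$th row of $\cN'_{N,\gamma}$, this yields the row identity
\begin{equation*}
R_i=\sum_{\ell=0}^{i}\binom{i}{\ell}x^{i-\ell}R'_{\ell+1},\qquad 0\leq i\leq N_\gamma-1,
\end{equation*}
which is verified entrywise: the $j$th entry of the right-hand side equals $\sum_{\ell=0}^{i}\binom{i}{\ell}x^{i-\ell}D_{j-\ell-1}(z_\gamma)$, and the terms with $\ell\geq j$ drop out because $D_m(z_\gamma)=0$ for $m<0$, so the sum reduces to the Leibniz expansion of $D_{j-1}(x^iz_\gamma)=(R_i)_j$.

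This identity exhibits an $N_\gamma\times N_\gamma$ lower triangular change-of-basis matrix $T_\gamma$ with $1$'s on the diagonal satisfying $\cN_{N,\gamma}=T_\gamma\cdot\cN'_{N,\gamma}$. Stacking over $\gamma\in\bZ/d\bZ$ produces a block-diagonal lower unitriangular matrix $T$ of size $\sD_N\times\sD_N$ with $\det T=1$ and $\cN_N=T\cdot\cN'_N$.

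To conclude, I would note that left multiplication by $T$ commutes with the operation of selecting a fixed set of columns. Hence for any index sequence $s=(s_1,\dots,s_{\sD_N})$, the $\sD_N\times\sD_N$ minor of $\cN_N$ on columns $s$ equals $\det T\cdot\Pi'_{N,s}=\Pi'_{N,s}$. On the other hand, since $\cN_N$ arises from $\cM_N$ by a permutation $P$ of rows, the same minor of $\cN_N$ equals $\det P\cdot\Pi_{N,s}=\pm\Pi_{N,s}$. Comparing the two evaluations gives $\Pi_{N,s}=\pm\Pi'_{N,s}$, as required. The argument is essentially bookkeeping once the Leibniz expansion is in place; the only mild point is that the Hasse-Schmidt identities used are valid in arbitrary characteristic, which is precisely the reason the Hasse-Schmidt derivative $D$ was introduced in \S\ref{sec:vals}.
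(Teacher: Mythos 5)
Your argument is essentially identical to the paper's: both apply the Hasse--Schmidt Leibniz rule together with $D_\ell(x^i)=\binom{i}{\ell}x^{i-\ell}$ to exhibit each row of $\cN_{N,\gamma}$ as a unitriangular $k(X)$-linear combination of the rows of $\cN'_{N,\gamma}$, whence $\cN_N$ and $\cN'_N$ have the same $\sD_N\times\sD_N$ minors, and then invoke the row permutation relating $\cN_N$ to $\cM_N$ to get the sign. The only difference is cosmetic (you keep the full sum $\sum_{\ell=0}^{i}$ while the paper splits off the $\ell=i$ term), and your indexing is in fact cleaner than the paper's.
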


\begin{proof}
We have already remarked that $\cN_N$ is obtained from $\cM_N$ by a permutation of the rows, so it remains to prove that  $\cN_N$ and $\cN'_N$ have the same minors.  But if $i\geq 0$, then
\begin{equation*}
D_n(x^iz)=\sum_{j=0}^{i-1}\binom{i}{j}x^{i-j}D_{n-j}(z)  + D_{n-i}(z)
\end{equation*}
from which we deduce that, if $\gamma\in \bZ/d\bZ$ and if $\cL_i$ and $\cL'_i$ denote respectively the $i^{\text{th}}$ lines of $\cN_{N,\gamma}$ and $\cN'_{N,\gamma}$, then $\cL_i=\cL'_i+\sum_{j=1}^{i-1}\binom{i-1}{j}x^j\cL'_{i-1-j}$.  The result follows.
\end{proof}

We can now prove the general bound on the number of zeros of $\Pi_{N,s}$ in $U$. For $\gamma\in \{-1,0,1,\dots  ,d-1\}$ define $S_\gamma$ by $S_{-1}=0$ and $S_\gamma=S_{\gamma-1}+N_\gamma$ if $\gamma\geq 0$. Thus $S_{d-1}=\sD_N$. 

\begin{theorem} \label{theo:main} Let $s$ be as above and suppose that $\Pi_{N,s}\neq 0$. Define 
\begin{equation*}
A_s=\sum_{i=N_0+1}^{\sD_N}(s_i-i)+\sum_{\gamma=1}^{d-1}N_\gamma S_{\gamma-1}.  
\end{equation*}
Then
\begin{equation*}
\sum_{\eta\in U} v_\eta(\Pi_{N,s})\leq A_s\big(4g+2d-4-\sum_{\xi\in \cR}(e_\xi-2)\big)-(\sD_N-N_0)\big(\sum_{\xi\in \cR}(r_\xi-e_\xi)\big)-\sum_{\gamma=1}^{d-1}N_\gamma\sum_{\xi\in \cR}v_\xi(z_\gamma).
\end{equation*}
\end{theorem}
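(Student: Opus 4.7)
The plan is to use that $\Pi_{N,s}$ is a nonzero rational function on $X$ which, by Proposition~\ref{prop:valDnf}, is regular on $U$. Since the divisor of such a function has degree zero,
$$\sum_{\eta\in U}v_\eta(\Pi_{N,s})=-\sum_{\xi\in\cR}v_\xi(\Pi_{N,s}),$$
so the theorem reduces to a lower bound on each $v_\xi(\Pi_{N,s})$. By Lemma~\ref{lem:PiPiprime} I may work with $\Pi'_{N,s}$. The first simplification is algebraic: $z_0=1$ implies $D_n(z_0)=0$ for $n\geq 1$, so the $N_0$ rows of $\cN'_{N,0}$ are just unit vectors $(0,\dots,0,1,0,\dots,0)$ with the $1$ in column $\ell$. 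Non-vanishing of $\Pi'_{N,s}$ therefore forces $\{1,\dots,N_0\}\subseteq\{s_1,\dots,s_{\sD_N}\}$, i.e.\ $s_j=j$ for $j\leq N_0$, and Laplace expansion along these rows and columns identifies $\Pi'_{N,s}$, up to sign, with a $(\sD_N-N_0)\times(\sD_N-N_0)$ minor $\Pi''_{N,s}$ built from the blocks $\cN'_{N,\gamma}$, $\gamma\geq 1$, on columns $s_{N_0+1},\dots,s_{\sD_N}$.

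Expanding $\Pi''_{N,s}$ as a signed sum over permutations $\sigma$, each surviving term is $\pm\prod_{i>N_0}D_{n_i(\sigma)}(z_{\gamma_i})$ with $n_i(\sigma)=s_{\sigma(i)}-\ell_i$, a row $i>N_0$ being indexed by its block $\gamma_i\geq 1$ and local position $\ell_i\in\{1,\dots,N_{\gamma_i}\}$. Proposition~\ref{prop:valDnf} gives a lower bound on each factor when $n_i(\sigma)\geq 1$, and fails to match the exact equality $v_\xi(D_0 f)=v_\xi(f)$ when $n_i(\sigma)=0$. The crucial observation is therefore that $N_\gamma\leq N_0$ for every $\gamma\geq 1$: the semigroup of non-gaps at $\infty$ contains $\{0,d,2d,\dots\}$, so $N_0\geq\lfloor N/d\rfloor+1$, whereas the count of integers $\leq N$ in any nonzero residue class modulo $d$ is at most $\lfloor N/d\rfloor+1$. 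Consequently $\ell_i\leq N_{\gamma_i}\leq N_0<s_{\sigma(i)}$ for all $i>N_0$ and every $\sigma$, so $n_i(\sigma)\geq 1$ throughout. Since $\sigma$ permutes columns, $\sum_{i>N_0}n_i(\sigma)$ is independent of $\sigma$; a direct computation using $s_j=j$ for $j\leq N_0$ identifies it with $A_s$. Applying Proposition~\ref{prop:valDnf} factor by factor then yields, uniformly in $\sigma$, the lower bound
$$v_\xi(\Pi_{N,s})\geq A_s(e_\xi-2r_\xi)+(\sD_N-N_0)(r_\xi-e_\xi)+\sum_{i>N_0}v_\xi(z_{\gamma_i}) \qquad (\xi\neq\infty),$$
and the analogous expression at $\xi=\infty$, which differs by an additive $+2dA_s$ once one substitutes $e_\infty=d$.

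Summing $-v_\xi(\Pi_{N,s})$ over $\xi\in\cR$, the rightmost term packages as $\sum_{\gamma\geq 1}N_\gamma\sum_\xi v_\xi(z_\gamma)$ (the $\gamma=0$ contribution vanishes because $z_0=1$), the $(\sD_N-N_0)$-term appears unchanged, and the coefficient of $A_s$ becomes $\sum_\xi(2r_\xi-e_\xi)+2d$. Lemma~\ref{lem:RH}(i) in the form $\sum_\xi r_\xi=2g+2d-2+\rho$ then rewrites this coefficient as $4g+2d-4-\sum_\xi(e_\xi-2)$, yielding the theorem. The main obstacle is precisely the combinatorial inequality $N_\gamma\leq N_0$ together with the $\sigma$-independence of $\sum n_i(\sigma)$: together they ensure that the factor-wise estimates of Proposition~\ref{prop:valDnf} combine into a single clean bound, rather than a weaker one spoiled by $\sigma$-dependent corrections coming from permutations for which some $n_i(\sigma)=0$.
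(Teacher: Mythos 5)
Your proof is correct and follows essentially the same route as the paper: convert to a lower bound on $\sum_{\xi\in\cR}v_\xi$ via $\deg\operatorname{div}=0$, pass to $\cN'_N$ by Lemma~\ref{lem:PiPiprime}, exploit $z_0=1$ and $N_\gamma\leq N_0$ to guarantee every surviving derivative order $n_i(\sigma)\geq 1$, apply Proposition~\ref{prop:valDnf} factor-by-factor (the $\sigma$-independence of $\sum_{i>N_0}n_i(\sigma)=A_s$ giving a uniform bound), and sum over $\cR$ using Lemma~\ref{lem:RH}. (Minor slip: after summing $-v_\xi$ the coefficient of $A_s$ is $\sum_\xi(2r_\xi-e_\xi)-2d$, not $+2d$, since the $+2dA_s$ correction at $\infty$ changes sign under negation; your stated final coefficient $4g+2d-4-\sum_\xi(e_\xi-2)$ is nonetheless correct.)
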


\begin{rem}
In the important special case of $\Delta_N$, we have $s_i=i$ for all $i$, so $A_s=\sum_{\gamma=1}^{d-1}N_\gamma S_{\gamma-1}$. 
\end{rem}

\begin{proof}[Proof of Theorem~\ref{theo:main}] 
Instead of proving an upper bound on $\sum_{\eta\in U}v_\eta(\Pi_{N,s})$ we prove a lower bound on $\sum_{\xi\in \cR} v_\xi(\Pi_{N,s})$ and use the fact the degree of the divisor of a function is zero. By Lemma~\ref{lem:PiPiprime}, we can replace $\Pi_{N,s}$ by $\Pi'_{N,s}$. We want to apply Proposition~\ref{prop:valDnf} to the $\Pi'_{N,s}$. The determinant $\Pi'_{N,s}$ is an alternating sum of products of the form $\prod_{\gamma=0}^{d-1}\prod_{i=1+S_{\gamma-1}}^{S_\gamma}D_{\sigma(i)-i+S_{\gamma-1}}(z_\gamma)$, where $\sigma:\{1,2,\dots, \sD_N\}\to \{s_1,s_2,\dots,  s_{\sD_N}\}$ is a bijection. We assume from now on that this product is non-zero. Since $z_0=1$, $\cN'_{N,0}$ is the $N_0\times N_0$ identity matrix augmented by $N-N_0$ columns of zeros on the right. It follows that $\sigma(i)=i$ if $i\leq N_0$ and therefore $\sigma(i)>N_0$ for all $i>N_0$.  Thus $\sigma$ induces a bijection between $\{N_0+1,N_0+2,\dots \sD_N\}$ and $\{s_{N_0+1},s_{N_0+2}, \dots s_{\sD_N}\}$ and therefore $\sum_{i=N_0+1}^{\sD_N}\sigma(i)=\sum_{i=N_0+1}^{\sD_N}s_i$ independently of $\sigma$.  Finally, all the terms with $\gamma=0$ are equal to $1$ so we can consider $\prod_{\gamma=1}^{d-1}\prod_{i=1+S_{\gamma_{i-1}}}^{S_\gamma}D_{\sigma(i)-i+S_{\gamma-1}}(z_\gamma)$. In this product, all the terms appear in the $(N_0+1)^{\text{st}}$ column of $\cN_N$  or to the right of it.

Now the $(N_0+1)^{\text{st}}$ column of $\cN_{N,\gamma}$ is equal to the transpose of 
\begin{equation*}
(D_{N_0}(z_\gamma), D_{N_0-1}(z_\gamma), \cdots, D_{N_0-N_\gamma+1}(z_\gamma))
\end{equation*}
and as we move further to the right in $\cM_{N,\gamma}$, we take derivations of higher and higher order. On the other hand, $N_\gamma\leq \lfloor{\frac{N}{d}}\rfloor+1$ for all $\gamma$, and $N_0=\lfloor{\frac{N}{d}}\rfloor+1$, since $\cO_X(N[\infty])$ contains the $\lfloor{\frac{N}{d}}\rfloor+1$ functions $x^i$, ($0\leq i\leq \lfloor{\frac{N}{d}}\rfloor$).  Hence $N_\gamma\leq N_0$ for all $\gamma$. It follows that $N_0-N_\gamma+1\geq 1$ and therefore $\sigma(i)-i+S_{\gamma-1}\geq 1$ for all $i>N_0$. 

We therefore deduce from Proposition~\ref{prop:valDnf} that, if $\xi\in \cR$ and $\xi\neq \infty$, 
\begin{align*}
v_\xi(\prod_{\gamma=1}^{d-1}&\prod_{i=1+S_{\gamma-1}}^{S_\gamma}D_{\sigma(i)-i+S_{\gamma-1}}(z_\gamma))=\sum_{\gamma=1}^{d-1}\sum_{i=1+{S_{\gamma-1}}}^{S_\gamma}v_\xi(D_{\sigma(i)-i+S_{\gamma-1}}(z_\gamma))\\
&\geq \sum_{\gamma=1}^{d-1}\sum_{i=1+S_{\gamma-1}}^{S_\gamma}(\sigma(i)-i+S_{\gamma-1}-1)e_\xi-(2(\sigma(i)-i+S_{\gamma-1})-1)r_\xi\\
&\hskip6cm  +\sum_{\gamma=1}^{d-1}N_\gamma v_\xi(z_\gamma)\\
&=\Big(\sum_{\gamma=1}^{d-1}\sum_{i=1+S_{\gamma-1}}^{S_\gamma}(\sigma(i)-i+S_{\gamma-1})\Big)(e_\xi-2r_\xi)-(\sD_N-N_0)(e_\xi-r_\xi)\\
&\hskip6cm  +\sum_{\gamma=1}^{d-1}N_\gamma v_\xi(z_\gamma)\\
&=\Big(\sum_{i=N_0+1}^{\sD_N}(s_i-i)+\sum_{\gamma=1}^{d-1}N_\gamma S_{\gamma-1}\Big)(e_\xi-2r_\xi)-(\sD_N-N_0)(e_\xi-r_\xi)\\
&\hskip6cm  +\sum_{\gamma=1}^{d-1}N_\gamma v_\xi(z_\gamma)\\
&=A_s(e_\xi-2r_\xi)+(\sD_N-N_0)(r_\xi-e_\xi)+\sum_{\gamma=1}^{d-1}N_\gamma v_\xi(z_\gamma).
\end{align*}
Similarly, 
\begin{align*}
v_\infty(\prod_{\gamma=1}^{d-1}&\prod_{i=1+S_{\gamma-1}}^{S_\gamma}D_{\sigma(i)-i+S_{\gamma-1}}(z_\gamma))=\sum_{\gamma=1}^{d-1}\sum_{i=1+{S_{\gamma-1}}}^{S_\gamma}v_\infty(D_{\sigma(i)-i+S_{\gamma-1}}(z_\gamma))\\
&\geq \sum_{\gamma=1}^{d-1}\sum_{i=1+S_{\gamma-1}}^{S_\gamma}(3(\sigma(i)-i+S_{\gamma-1})-1)e_\infty-(2(\sigma(i)-i+S_{\gamma-1})-1)r_\infty\\
&\hskip6cm  +\sum_{\gamma=1}^{d-1}N_\gamma v_\infty(z_\gamma)\\
&=\Big(\sum_{\gamma=1}^{d-1}\sum_{i=1+S_{\gamma-1}}^{S_\gamma}(\sigma(i)-i+S_{\gamma-1})\Big)(3e_\infty-2r_\infty)-(\sD_N-N_0)(e_\infty-r_\infty)\\
&\hskip6cm  +\sum_{\gamma=1}^{d-1}N_\gamma v_\infty(z_\gamma)\\
&=A_s(3e_\infty-2r_\infty)+(\sD_N-N_0)(r_\infty-e_\infty)+\sum_{\gamma=1}^{d-1}N_\gamma v_\infty(z_\gamma).
\end{align*}

Summing over all $\xi \in \cR$ and recalling that $e_\infty=d$, we deduce that
\begin{equation*}
\sum_{\xi\in \cR}v_\xi(\Pi'_{N,s})\geq A_s\big(2d+\sum_{\xi\in \cR}(e_\xi-2r_\xi)\big)+(\sD_N-N_0)\big(\sum_{\xi\in \cR}(r_\xi-e_\xi)\big)+\sum_{\gamma=1}^{d-1}N_\gamma\sum_{\xi\in \cR}v_\xi(z_\gamma).
\end{equation*}
Finally,  writing $e_\xi-2r_\xi=(e_\xi-2)-2(r_\xi-1)$, we find
\begin{align*}
2d+\sum_{\xi\in \cR}(e_\xi-2r_\xi)&=2d-2\sum_{\xi\in \cR}(r_\xi-1)+\sum_{\xi\in \cR}(e_\xi-2)\\
&=2d-4(g+d-1)+\sum_{\xi\in \cR}(e_\xi-2)\\
&=-4g-2d+4+\sum_{\xi\in \cR}(e_\xi-2)
\end{align*}
from which the result follows.
\end{proof} 

\section{Orders of zeros at torsion points}

The purpose of this section is to prove Proposition~\ref{prop:Nplusrzeros}, and that $\Delta_N$ has a zero of order at least $N-\sD_N+1$ at any point of $X[N]$ not lying in $\cR$. In fact, these are special cases of the following more technical-looking result.

\begin{prop}\label{prop:orderzeros} Let $N\geq 2$ and let $r\geq 0$ be integers and let $\xi\in X[N]$, $\xi\notin \cR$.

\case{i} Let $s:\{1,2,\dots, \sD_N\} \to \{1,2,\dots, N\}$ be as before. Then $\Pi_{N,s}$ has a zero of order at least $N+1-s_{\sD_N}$ at $\xi$. 

\case{ii}  Let $\ell=\lfloor{\frac{r}{d}}\rfloor+1$. Then $\Delta_{N+r}$ has a zero of order at least $\ell(N+\ell-\sD_{N+r})$ at $\xi$.
\end{prop}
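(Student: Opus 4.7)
My plan is to prove both parts using the single function $\alpha\in\cO_X(N[\infty])$ with $\mathrm{div}(\alpha)=N([\xi]-[\infty])$ provided by the Abel--Jacobi theorem applied to $\xi\in X[N]$, together with the uniformizer $u:=x-x(\xi)$ at $\xi$ (which is a uniformizer because $\xi\notin\cR$ and so $\xi\neq\infty$).  By Lemma~\ref{lem:extD} and the translation invariance of $D$, the extended Hasse--Schmidt derivation acts on $\hat\cO_{X,\xi}$ just as the standard derivation in $u$, yielding $D_{k-1}(u^n)=\binom{n}{k-1}u^{n-k+1}$ for $n\geq 0$ (with $\binom{n}{k-1}=0$ when $n<k-1$), and hence the key estimate $v_\xi(D_{k-1}(f))\geq v_\xi(f)-k+1$ for any $f\in\hat\cO_{X,\xi}$.

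For \case{i}, I expand $\alpha=\sum_{i=1}^{\sD_N}\lambda_iy_i$ and fix $i_0$ with $\lambda_{i_0}\neq 0$.  Replacing row $i_0$ of the $\sD_N\times\sD_N$ submatrix of $\cM_N$ indexed by $s$ by $\sum_i\lambda_i\cdot(\text{row }i)$ scales the determinant by $\lambda_{i_0}\in k^*$ and produces a row whose $j$-th entry equals $D_{s_j-1}(\alpha)$ with $v_\xi\geq N-s_j+1$.  All cofactors lie in $\cO_{X,\xi}$ (by Lemma~\ref{lem:extD}), so Laplace expansion along the new row gives $v_\xi(\Pi_{N,s})\geq\min_j(N-s_j+1)=N+1-s_{\sD_N}$.

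For \case{ii}, the idea is to produce not one but $\ell$ independent functions in $V:=\cO_X((N+r)[\infty])$ with strictly increasing vanishing at $\xi$: set $\beta_j:=\alpha u^j$ for $j=0,1,\dots,\ell-1$.  Since $(\ell-1)d\leq r$ by the definition of $\ell$, each $\beta_j$ lies in $\cO_X((N+jd)[\infty])\subseteq V$, and $v_\xi(\beta_j)=N+j$.  Introduce the filtration $V_m=\{f\in V\mid v_\xi(f)\geq m\}$ and the non-gap set $\cW_\xi:=\{w\geq 0\mid\dim V_w>\dim V_{w+1}\}$, which has $\sD_{N+r}$ elements (since each successive quotient embeds in $k$ via the leading coefficient in $u$).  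The functions $1,\beta_0,\beta_1,\dots,\beta_{\ell-1}$ witness $\{0,N,N+1,\dots,N+\ell-1\}\subseteq\cW_\xi$.  Pick a basis $\{f_i\}_{i=1}^{\sD_{N+r}}$ of $V$ with $v_\xi(f_i)=w_i$, the $i$-th element of $\cW_\xi$ in increasing order; the change-of-basis matrix from $\{y_i\}$ to $\{f_i\}$ is invertible over $k$, so $v_\xi(\Delta_{N+r})=v_\xi(\det(D_{k-1}(f_i))_{i,k})$.

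Using the key estimate $v_\xi(D_{k-1}(f_i))\geq w_i-k+1$ and expanding the determinant as a signed sum over permutations $\sigma$ of $\{1,\dots,\sD_{N+r}\}$, every term has valuation at least $\sum_iw_i-\binom{\sD_{N+r}}{2}$ (since $\sum_i\sigma(i)=\binom{\sD_{N+r}+1}{2}$ is independent of $\sigma$).  It then suffices to minimize $\sum_iw_i$ over subsets of $\bZ_{\geq 0}$ of size $\sD_{N+r}$ containing $\{0,N,\dots,N+\ell-1\}$.  When $\sD_{N+r}\leq N+\ell$, the minimizer is $\{0,1,\dots,\sD_{N+r}-\ell-1\}\cup\{N,\dots,N+\ell-1\}$, and the elementary identity $\binom{\sD_{N+r}-\ell}{2}+\ell N+\binom{\ell}{2}-\binom{\sD_{N+r}}{2}=\ell(N+\ell-\sD_{N+r})$ yields the desired bound; otherwise $\ell(N+\ell-\sD_{N+r})\leq 0$ and the conclusion holds trivially because $\Delta_{N+r}\in\cO_{X,\xi}$.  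The delicate point to verify is precisely the key estimate above, which requires Lemma~\ref{lem:extD} and translation invariance to transfer the problem to the standard Hasse--Schmidt derivation in $u$; the vanishing of $\binom{n}{k-1}$ for $0\leq n<k-1$ is exactly what prevents negative powers of $u$ from appearing.
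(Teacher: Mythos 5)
Your proof is correct, and while Part~\case{i} is essentially the paper's argument (the paper simply chooses $y_{\sD_N}=\alpha$ in advance, which is the same as your row replacement), your treatment of Part~\case{ii} takes a genuinely different route. The paper works directly inside the matrix $\cN_{N+r}$: it chooses $y_{\sD_{N+id}}=(x-x_0)^i\alpha$ for $0\le i\le \ell-1$, observes that every monomial in the Leibniz expansion of $\Delta_{N+r}$ uses exactly one entry from each of these $\ell$ special rows in distinct columns, and then bounds the valuation by noting the worst case is when those entries come from the $\ell$ rightmost columns; a short index computation gives $\ell(N+\ell-\sD_{N+r})$. You instead pass to the abstract wronskian estimate $v_\xi(\Delta_{N+r})\ge\sum_i w_i-\binom{\sD_{N+r}}{2}$ for any basis of $V=\cO_X((N+r)[\infty])$ adapted to the vanishing filtration at $\xi$, use the functions $1,\alpha,\alpha u,\dots,\alpha u^{\ell-1}$ only to certify $\{0,N,\dots,N+\ell-1\}\subseteq\cW_\xi$, and then minimize $\sum w_i$ over all $\sD_{N+r}$-element subsets of $\bZ_{\ge 0}$ containing those elements. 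These approaches compute the same quantity, but yours isolates the purely combinatorial optimization from the matrix manipulations and makes the underlying mechanism (a Weierstrass-gap count at $\xi$) more transparent, at the price of introducing the filtration $(V_m)$ as auxiliary scaffolding; the paper's argument stays closer to the explicit matrices $\cM_N$, $\cN_N$ that are used throughout. One point worth noting explicitly, which you leave implicit: your minimizing set requires $\sD_{N+r}\ge\ell+1$ and the case split $\sD_{N+r}\le N+\ell$ versus $\sD_{N+r}>N+\ell$; both conditions are automatically compatible with the nontriviality of the statement because the existence of $\xi\in X[N]\setminus\cR$ forces $N$ to be a Weierstrass non-gap at $\infty$, hence $N\ge d$ and $\sD_{N+r}\ge\lfloor (N+r)/d\rfloor+1\ge\ell+1$.
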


\begin{rem} The fact that $\Delta_N$ has a zero of order at least $N-\sD_N+1$ at a point of $X[N]-\cR$ can be viewed either as the case $s_i=i$ of \case{i} or the case $r=0$ of \case{ii} of the Proposition.  To prove Proposition~\ref{prop:Nplusrzeros},  it suffices to remark that under its hypotheses $\sD_{N+r}=N+r-g+1$, so $N+\ell-\sD_{N+r}=g-r+\lfloor{\frac{r}{d}}\rfloor$. 
\end{rem}

\begin{proof} Let $\alpha\in k(X)$ have divisor $N([\xi]-[\infty])$. We can further suppose that $\alpha=\frac{1}{t^N}+O(t^{1-N})$, and then choose $y_{\sD_N}=\alpha$. 

\case{i} We know that $D_n\alpha$ has a zero of order at least $N-n$ at $\xi$ for all $n\leq N$. Thus the entry at the intersection of the bottom row and the $j^{\text{th}}$ column of $\Pi_{N,s}$ has a zero of order at least $N+1-s_j$ at $\xi$. Since $s$ is strictly increasing, expanding $\Pi_{N,s}$ by the bottom row gives the result.

\case{ii} Write $x_0=x(\xi)$. Since $\xi\notin \cR$, $x-x_0$ is a uniformizer at $\xi$ and we can also choose $y_{\sD_{N+id}}=(x-x_0)^i\alpha$ for all $i\geq 0$.  Now $\Delta_{N+r}$ is the determinant of the matrix $\cM_{N+r}^{(N+r-\sD_{N+r})}$ or alternatively of $\cN_{N+r}^{(N+r-\sD_{N+r})}$. The rows that have been modified are all situated at the bottom of $\cN_{N+r,\gamma}^{(N+r-\sD_{N+r})}$ (where $\gamma$ is the residue class of $N \pmod{d}$) and there are $\ell$ of them. In fact the modified rows look like this:
\begin{equation*}
\begin{pmatrix}
\alpha  &             D_1\alpha                 &       \cdots   &      D_{\sD_{N+r}-1}(\alpha)  \\
(x-x_0)\alpha & D_1((x-x_0)\alpha)   & \cdots   &  D_{\sD_{N+r}-1}(x-x_0)\alpha)  \\ 
\vdots   &               \vdots                     &       \vdots   &      \vdots                  \\
(x-x_0)^{\ell-1}\alpha & D_1((x-x_0)^{\ell-1}\alpha)  &\cdots    &  D_{\sD_{N+r}-1}(x-x_0)^{\ell-1}\alpha)
\end{pmatrix}.
\end{equation*}
Each product appearing in the expansion of the determinant $\Delta_N$ contains one and only one entry of each of these rows, each entry being in a different column. Since $v_\xi(D_n(x-x_0)^i\alpha)\geq N+i-n$, the lower bound on the order of the zero at $\xi$ at each entry of this matrix can be presented as a matrix
\begin{equation*}
\begin{pmatrix}
N      &           N-1                           &   N-2                         &  \cdots   &       s            &     s-1  &   \cdots    &       s-\ell +1                     \\
N+1  &            N                             &   N-1                         &  \cdots   &      s+1          &     s     &    \cdots    &     s-\ell+2                    \\
\vdots   &               \vdots                  &     \vdots                    &   \vdots   &      \vdots        &   \vdots &     \vdots   &      \vdots                  \\
N+\ell-1&     N+\ell-2                      &   N+\ell-3                   &  \cdots    &     s+\ell-1   &    s+\ell-2   &    \cdots    &        s                            \\ 
\end{pmatrix},
\end{equation*}
where $s$ is defined as the bottom right entry, so $s=N+\ell-1-(\sD_{N+r}-1)=N+\ell-\sD_{N+r}$. Thus the smallest contribution is from products containing entries in the $\ell$ rightmost columns, and these contribute a zero of order at least $s\ell$. This completes the proof of the Proposition.
\end{proof}

\begin{rem}
In \S~\ref{sec:applex} we shall only use the case $r=0$ of \case{ii} of the Proposition. But our arguments could be modified to prove a bound for a weighted sum of cardinalities of sets $X[N-r]$ with $r$ running over a short interval starting with $r=0$. We omit the details. 
\end{rem}

\section{Non-vanishing of $\Delta_N$} \label{sec:DeltaNzero}

The main result of this section is the following Proposition. As usual, we suppose that $N\geq 2$.  If $r\in \{0,1,\dots, N-\sD_N\}$, denote by $s^{(r)}=(s_1^{(r)}, \dots, s_{\sD_N}^{(r)})$ the sequence defined by $s^{(r)}_i=i$ when $i\leq N_0$ and $s^{(r)}_i=i+N-\sD_N-r$ when $i\geq N_0+1$. 

\begin{prop} \label{prop:DeltaNzero}

\case{a} Suppose $p=0$. The for all $r\in \{0,1,\dots, N-\sD_N\}$,  $\Pi_{N,s^{(r)}}\neq 0$.

\case{b} Suppose $p>N$. If further $p$ does not divide $d$,  then $\Delta_N\neq 0$.
\end{prop}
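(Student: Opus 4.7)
Plan:

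My approach is a leading-term analysis at $\infty$. By Lemma~\ref{lem:PiPiprime} we may work with $\Pi'_{N,s}$ computed from $\cN'_N$, whose $\gamma=0$ block, since $z_0=1$, reduces to $(I_{N_0}\mid 0)$ on its leftmost $N_0$ columns. Both column sequences in the statement---$(1,2,\ldots,\sD_N)$ for $\Delta_N$ and $s^{(r)}$ for $\Pi_{N,s^{(r)}}$---contain $\{1,\ldots,N_0\}$, so Laplace expansion along these rows reduces the minor to a determinant of size $M\times M$ (with $M=\sD_N-N_0$) in the $\gamma\geq 1$ blocks.  The rows of this block are indexed by pairs $(\gamma,i)$ with $1\leq\gamma\leq d-1$ and $1\leq i\leq N_\gamma$, and the columns by a consecutive interval $\{J,J+1,\ldots,J+M-1\}$ where $J=N_0$ for $\Delta_N$ and $J=N_0+(N-\sD_N-r)$ for $\Pi_{N,s^{(r)}}$.

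Next I would compute the leading coefficient of this $M\times M$ determinant at $\infty$.  Using the chain rule together with $1/x=t^d(1+O(t))$ and $x^{i-1}z_\gamma=t^{-(i-1)d-\delta_{n_\gamma}}(1+O(t))$ gives
\[
D_j\bigl(x^{i-1}z_\gamma\bigr)=\binom{x_\gamma+i-1}{j}\,t^{-\delta_{n_\gamma}-(i-1)d+jd}\bigl(1+O(t)\bigr),
\]
where $x_\gamma=\delta_{n_\gamma}/d$ and $\binom{y}{j}=y(y-1)\cdots(y-j+1)/j!$ is the generalised binomial. Every monomial in the determinantal expansion carries the same leading $t$-exponent, so the leading coefficient of the minor equals the binomial determinant $\det\bigl(\binom{x_\gamma+i-1}{j}\bigr)$ with rows indexed by $(\gamma,i)$ and columns $j\in\{J,\ldots,J+M-1\}$.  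Setting $y_\mu=x_\gamma+i-1$ and using $y^{\underline{J+j'}}=y^{\underline{J}}(y-J)^{\underline{j'}}$ together with the classical Vandermonde yields the factorisation
\[
\det\Bigl(\tbinom{y_\mu}{J+j'}\Bigr)=\frac{\bigl(\prod_\mu y_\mu^{\underline{J}}\bigr)\bigl(\prod_{\mu<\mu'}(y_{\mu'}-y_\mu)\bigr)}{\prod_{j'=0}^{M-1}(J+j')!}.
\]

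The non-vanishing therefore reduces to three numerical checks: (i) the Vandermonde $\prod(y_{\mu'}-y_\mu)$ is nonzero because $d\,y_\mu=\delta_{n_\gamma}+(i-1)d$ are pairwise distinct pole orders in $\{1,\ldots,N\}$, so each difference $d(y_{\mu'}-y_\mu)$ is a nonzero integer of absolute value $\leq N-1$; (ii) each falling factorial $y_\mu^{\underline{J}}$ is nonzero because $d(y_\mu-k)=\delta_{n_\gamma}+(i-1-k)d$ is a nonzero integer of absolute value $\leq N$ (nonzero because $\gamma\neq 0$ means $\delta_{n_\gamma}$ is not a multiple of $d$); and (iii) $(J+j')!$ is invertible because $J+j'\leq\sD_N-1\leq N-1$.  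In characteristic $0$ all these are automatic. In characteristic $p>N$ with $p\nmid d$, each relevant integer has absolute value at most $N<p$, hence is a unit in $k$; the condition $p\nmid d$ is needed both to divide by $d$ in passing to $x_\gamma=\delta_{n_\gamma}/d$ and to ensure tame ramification at $\infty$, justifying the leading-term computation. The main technical obstacle is precisely this careful bookkeeping---verifying that all integers appearing in the Jacobi--Trudi factorisation have absolute value at most $N$---which rests on the key numerical inputs $\delta_n\leq N$ for $n\leq\sD_N$ and $d(N_0-1)\leq N$.
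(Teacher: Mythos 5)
Your proposal follows essentially the same route as the paper's own proof: by Lemma~\ref{lem:PiPiprime} reduce to the bottom-right $(\sD_N-N_0)\times(\sD_N-N_0)$ block (the $\gamma=0$ rows contribute an upper-unitriangular factor), compute the leading coefficient of each entry in the Puiseux parameter $t=x^{-1/d}$ at $\infty$, factor the resulting binomial determinant with a Vandermonde-type identity, and check that the falling-factorial and Vandermonde factors are units in $k$ under the hypotheses $p=0$ or $p>N$, $p\nmid d$. The paper isolates the determinant evaluation in a standalone lemma $\det\bigl(\binom{x_i}{\ell+j-1}\bigr)_{i,j}=\prod_i\binom{x_i}{\ell}\cdot\prod_{i<j}(x_j-x_i)\big/\prod_j(\ell+j)^{n-j}$, which is exactly the factorization you derive, and the bookkeeping you describe (each relevant integer has absolute value at most $N$; the $\delta_i$ are distinct and not divisible by $d$) is the same as the paper's. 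The argument is correct.
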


\begin{cor}  Let $r\in \{0,1,\dots, N-\sD_N\}$. 

\case{a} Suppose $p=0$. Then $(N-r)_*X\not\subseteq W_r^{-}$. 
 
\case{b} Suppose  $p>N$. Then $(\sD_N)_*X\not\subseteq W_{N-\sD_N}^{-}$. 
\end{cor}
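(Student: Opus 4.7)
The plan is to deduce both parts from Proposition~\ref{prop:DeltaNzero} via Corollary~\ref{cor:crucialgeneric}, which translates ``$\cM_N^{(r)}$ has rank $\sD_N$'' into ``$(N-r)_*X\not\subseteq W_r^-$''. Hence it suffices, in each case, to exhibit a non-vanishing $\sD_N\times\sD_N$ minor of $\cM_N^{(r)}$.

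For part \case{a}, Proposition~\ref{prop:DeltaNzero}\case{a} supplies the candidate minor $\Pi_{N,s^{(r)}}$ for each $r\in\{0,1,\dots,N-\sD_N\}$. The one thing to verify is that $\Pi_{N,s^{(r)}}$ genuinely lives inside $\cM_N^{(r)}$, that is, that all its columns have index at most $N-r$. This is a one-line index check: by definition $s^{(r)}_i=i$ for $i\leq N_0$ and $s^{(r)}_i=i+N-\sD_N-r$ for $i\geq N_0+1$, so the largest column index used is $s^{(r)}_{\sD_N}=N-r$. Thus $\Pi_{N,s^{(r)}}$ is an $\sD_N\times\sD_N$ minor of $\cM_N^{(r)}$, so this latter matrix has rank $\sD_N$, and Corollary~\ref{cor:crucialgeneric} delivers the conclusion.

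For part \case{b}, I would run the same argument with $r=N-\sD_N$ and with the minor $\Delta_N$, which by definition uses columns $1,\dots,\sD_N$ and is therefore a minor of $\cM_N^{(N-\sD_N)}$. To invoke Proposition~\ref{prop:DeltaNzero}\case{b} I need $p\nmid d$; in the non-trivial range $N\geq d$ (if $N<d$, then $\sD_N=1$, $\cM_N$ is the row vector $(1,0,\dots,0)$, and $\Delta_N=1$ is already nonzero), the hypothesis $p>N$ automatically gives $p>d$, hence $p\nmid d$. Proposition~\ref{prop:DeltaNzero}\case{b} thus yields $\Delta_N\neq 0$, so $\cM_N^{(N-\sD_N)}$ has rank $\sD_N$, and Corollary~\ref{cor:crucialgeneric} gives the desired non-inclusion.

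There is no serious obstacle; the argument is essentially bookkeeping once Proposition~\ref{prop:DeltaNzero} is in hand. The only points requiring attention are the index identity $s^{(r)}_{\sD_N}=N-r$ in part \case{a} and the implication ``$p>N$ implies $p\nmid d$'' in part \case{b}; all the real content lies in Proposition~\ref{prop:DeltaNzero} and Corollary~\ref{cor:crucialgeneric}.
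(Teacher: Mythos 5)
Your proposal matches the paper's own proof: both parts are deduced by observing that $\Pi_{N,s^{(r)}}$ (respectively $\Delta_N = \Pi_{N,s^{(N-\sD_N)}}$) is a minor of $\cM_N^{(r)}$, invoking Proposition~\ref{prop:DeltaNzero} to see it is nonzero, and then applying Corollary~\ref{cor:crucialgeneric}. You have in addition caught a genuine subtlety that the paper's proof glosses over but its subsequent Remark acknowledges, namely that part~\case{b} assumes only $p>N$ whereas Proposition~\ref{prop:DeltaNzero}\case{b} also requires $p\nmid d$; your observation that $p>N\geq d$ forces $p\nmid d$ (and that $\Delta_N=1$ trivially when $N<d$, taking $d=\delta_2$ as in the Introduction) is exactly the right way to close that gap.
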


\begin{proof}[Proof of the Corollary]
\case{a} Note that $\Pi_{N,s^{(r)}}$ is one of the minors of $\cM_N^{(r)}$.  Therefore \case{a} follows from Corollary~\ref{cor:crucialgeneric} and \case{a} of the Proposition.

\case{b} First note that the condition $(\sD_N)_*X\not\subseteq W_{N-\sD_N}^{-}$ does not depend on the choice of $d$ made at the beginning of \S~\ref{sec:vals}. Therefore Corollary~\ref{cor:crucialgeneric} implies that the rank of $\cM_N^{(N-\sD_N)}$ is independent of this choice. By the Riemann-Roch theorem, we can choose $d$ not divisible by $p$ and then apply \case{b} of the Proposition. \end{proof}

\begin{rem} More generally, if $r\in \{0,1,\dots N-\sD_N\}$, the condition $(N-r)_*X\not\subseteq W_r^{-}$ is independent of the choice of $d$, so the rank of $\cM_N^{(r)}$ is independent of the choice of $d$. It follows that the vanishing or not of $\Delta_N$ is independent of the choice of $d$.  In particular, when $p>N$, then  $\Delta_N\neq 0$ even when $p$ divides $d$.
\end{rem}

\begin{proof}[Proof of the Proposition]
When $p>0$, the condition $p$ does not divide $d$ means that $\pi$ is tamely ramified at $\infty$.  The function $x$ defines a canonical isomorphism of the completed local ring $\hat{\cO}_{\bP^1,\infty}$ with $k[[\frac{1}{x}]]$. By Puiseux theory, the ring of formal Laurent series $k((\frac{1}{x}))$ has a unique extension of degree $d$, namely $k((\frac{1}{x^{\frac{1}{d}}}))$.  Since $\pi:X\to \bP^1$ is totally ramified of degree $d$ at $\infty$, we deduce that the fraction field of $\hat{\cO}_{X,\infty}$ is isomorphic to $k((\frac{1}{x^{\frac{1}{d}}}))$. Furthermore, since $\hat{\cO}_{X,\infty}$ is integrally closed, it must correspond under this isomorphism to $k[[\frac{1}{x^{\frac{1}{d}}}]]$. We can then view $\frac{1}{x^{\frac{1}{d}}}$ as a formal parameter on $X$ at $\infty$. Then for every $i$, $y_i$ can be viewed as an element of $k((\frac{1}{x^{\frac{1}{d}}}))$, and it has a formal expansion 
\begin{equation*}
y_i=x^{\frac{\delta_i}{d}}+O(x^{\frac{\delta_i-1}{d}})
\end{equation*} 
in ascending powers of $\frac{1}{x^{\frac{1}{d}}}$.  Thus $D_j(y_i)=\binom{\frac{\delta_i}{d}}{j}x^{-\frac{\delta_i}{d}-j}+O(x^{-\frac{\delta_i}{d}-j-1})$ for all $j\geq 0$. Now $\Pi_{N,s^{(r)}}$ is the determinant of the bottom right $(\sD_N-N_0)\times (\sD_N-N_0)$-submatrix $\tilde{\cN}_N^{(r)}$
 of $\cN_N^{(r)}$. Therefore $\Pi_{N,s^{(r)}}$ will certainly be non-zero if the determinant of its \emph{leading coefficient matrix} $L_{N,r}$ of this matrix is non-zero. By definition, this is the matrix whose $(i,j)^{\text{th}}$ entry is the coefficient of the highest power of $\frac{1}{x^{\frac{1}{d}}}$ that can possibly occur in the expansion of the $(i,j)^{\text{th}}$ entry of $\tilde{\cN}_N^{(r)}$. Therefore, up to permutation of the rows, 
\begin{equation*}
L_{N,r}=\begin{pmatrix}  \binom{\frac{\delta_{i_1}}{d}}{r'} &  \binom{\frac{\delta_{i_1}}{d}}{r'+1} & \binom{\frac{\delta_{i_1}}{d}}{r'+2} & \cdots    &   \binom{\frac{\delta_{i_1}}{d}}{N-1-r}\\
 \binom{\frac{\delta_{i_2}}{d}}{r'} &  \binom{\frac{\delta_{i_2}}{d}}{r'+1} & \binom{\frac{\delta_{i_2}}{d}}{r'+2} & \cdots    &   \binom{\frac{\delta_{i_2}}{d}}{N-1-r}\\
\vdots &  \vdots           &     \vdots                             &   \ddots    &        \vdots       \\
 \binom{\frac{\delta_{i_{\sD_N-N_0}}}{d}}{r'} &  \binom{\frac{\delta_{i_{\sD_N-N_0}}}{d}}{r'+1}& \binom{\frac{\delta_{i_{\sD_N-N_0}}}{d}}{r'+2} & \cdots    &   \binom{\frac{\delta_{i_{\sD_N-N_0}}}{d}}{N-1-r}\\
\end{pmatrix},
\end{equation*}
where $r'=N_0+N-\sD_N-r$ and $\{i_1,i_2, \dots, i_{\sD_N-N_0}\}$ is the set of $i\in \{1,2,\dots, \sD_N\}$ such that $\delta_i\not\equiv 0\pmod{d}$. 
 
The computation of the determinant of $L_{N,r}$ is just the following Lemma:

\begin{lemma} Let $x_1$, $x_2$, \dots $x_n$  be variables, let $\ell\in \bZ_+$, and let $M$ be the matrix
\begin{equation*}
M=\Big(\binom{x_i}{\ell+j-1}\Big)_{1\leq i,j\leq n}.
\end{equation*}
Then 
\begin{equation*}
\det{(M)}=\prod_{i=1}^n\binom{x_i}{\ell} \frac{\prod_{1\leq i<j\leq n}(x_j-x_i)}{\prod_{j=1}^{n-1}(\ell+j)^{n-j}.}
\end{equation*}
\end{lemma}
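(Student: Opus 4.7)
The plan is to reduce the claim to a Vandermonde-type identity by a sequence of row and column factorizations.

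First, I would factor out $\binom{x_i}{\ell}$ from the $i^{\text{th}}$ row. Using the identity
\begin{equation*}
\binom{x_i}{\ell+j-1}=\binom{x_i}{\ell}\cdot \frac{(x_i-\ell)(x_i-\ell-1)\cdots (x_i-\ell-j+2)}{(\ell+1)(\ell+2)\cdots (\ell+j-1)},
\end{equation*}
valid for all $j\geq 1$ (both numerator and denominator being empty products when $j=1$), one sees that each row of $M$ has a common factor $\binom{x_i}{\ell}$. After extracting these factors, I would factor out the denominators column by column: the factor $\ell+k$ appears in column $j$ exactly when $j\geq k+1$, hence a total of $n-k$ times across all columns. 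This accounts precisely for the denominator $\prod_{k=1}^{n-1}(\ell+k)^{n-k}$ in the statement.

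What remains is to evaluate the determinant of the matrix whose $(i,j)$-entry is the monic polynomial
\begin{equation*}
p_j(x_i)=(x_i-\ell)(x_i-\ell-1)\cdots (x_i-\ell-j+2)
\end{equation*}
of degree $j-1$ in $x_i$. Since the $p_j$ are monic of degrees $0,1,2,\ldots, n-1$, successive column operations (replacing column $j$ by a suitable $\bZ$-linear combination of columns $1,\ldots, j$ to kill the lower-order terms) reduce this determinant to the classical Vandermonde determinant $\det(x_i^{j-1})_{1\leq i,j\leq n}=\prod_{1\leq i<j\leq n}(x_j-x_i)$.

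There is no real obstacle here: the argument is a direct computation, and the only care needed is to check that the bookkeeping of the factors $(\ell+k)$ extracted column by column matches the exponent $n-k$ in the denominator. Combining the three factorizations yields exactly the claimed formula.
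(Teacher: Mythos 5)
Your proof is correct and follows essentially the same route as the paper: factoring $\binom{x_i}{\ell}$ from each row, extracting the $(\ell+t)$ factors column by column (with the same bookkeeping yielding $\prod_{k=1}^{n-1}(\ell+k)^{n-k}$), and reducing the remaining matrix of monic polynomials of degrees $0,1,\dots,n-1$ to the Vandermonde determinant via column operations. No discrepancies with the paper's argument.
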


\begin{proof} For every $i$, $j$ write
\begin{equation*}
\binom{x_i}{\ell+j-1}=\binom{x_i}{\ell}\frac{\prod_{t=1}^{j-1}(x_i-\ell-t+1)}{\prod_{t=1}^{j-1}(\ell+t)}, 
\end{equation*}
and view $\binom{x_i}{\ell}$ as a common factor of the entries in the $i^{\text{th}}$ row and $\prod_{t=1}^{j-1}(\ell+t)^{-1}$ as a common factor of the entries in the $j^{\text{th}}$ column. This gives
\begin{equation*}
\det{(M)}=\prod_{i=1}^n\binom{x_i}{\ell}\Big( \prod_{j=1}^{n-1}(\ell+j)^{-(n-j)}\Big)\det{(N)},  
\end{equation*}
where $N=\big(\prod_{t=1}^{j-1}(x_i-\ell-t+1)\big)_{i,j}$. The $(i,j)^{\text{th}}$ entry of $N$ is a monic polynomial of degree $j-1$ in $x_i$, so column operations show that $N$ has the same determinant as the Vandermonde matrix $\big(x_i^{j-1}\big)_{i,j}$, namely $\prod_{i<j}(x_j-x_i)$.
\end{proof}

Returning to the proof of the Proposition, we deduce from the Lemma that
\begin{equation}  \label{eq:LNr}
\det{(L_{N,r})}= \prod_{t=1}^{\sD_N-N_0}\binom{\frac{\delta_{i_t}}{d}}{r'} \frac{\prod_{1\leq t<s\leq \sD_N-N_0}{(\frac{\delta_{i_s}-\delta_{i_t}}{d})}}{\prod_{j=1}^{\sD_N-N_0}(r'+j)^{\sD_N-N_0-j}}. 
\end{equation}

Suppose $p=0$. Since $d$ does not divide the $\delta_{i_t}$'s, the binomial coefficients cannot vanish. Since the $\delta_i$'s are distinct, we deduce that $\det{(L_N)}\neq 0$. This proves \case{a}. 

Suppose $p>N$ and $r=N-\sD_N$. Then $r'=N_0=\lfloor{\frac{N}{d}}\rfloor+1$. Since $0\leq \delta_{i_t}\leq N$ and $\delta_{i_t}\not\equiv 0\pmod{d}$ for all $t$, the integers $\delta_i-\ell d$ ($0\leq \ell \leq N_0-1$) all lie in the interval $[-(p-1), (p-1)]$ and are non-zero. This implies that the binomial coefficients (which are $p$-adic integers since $p$ does not divide $d$) are not divisible by $p$. Similarly, the $\delta_i$'s are all distinct $\pmod{p}$ and, since the definition of $\cN_{N}$ involves only Hasse-Schmidt derivatives of order up to $N-1$, none of the integers $r'+j$ appearing in the denominator is divisible by $p$. This proves \case{b}. 
\end{proof} 

\begin{rems}
\case{i} When $0<p\leq N$, the leading coefficient of $\Delta_N$ may or may not vanish, and when it does, $\Delta_N$ itself may or may not vanish. Examples of this were given in \cite{Bo23}, \S~5. See also subsection~\ref{sub:examples} below. 

\case{ii} Proposition~\ref{prop:DeltaNzero}~\case{a} is rather weak as stated when $r<N-\sD_N$ since from \case{b} we already know that that the rank of $\cM_N^{(N-\sD_N)}$ --- and hence that of $\cM_N^{(r)}$ when $r<N-\sD_N$ --- is maximal. But the computation leading to the formula (\ref{eq:LNr}) for the leading coefficient is valid in any characteristic and might be useful for checking the maximality of the rank of $\cM_N^{(r)}$ in cases when that of $\cM_N^{(N-\sD_N)}$ is not maximal.

\case{iii} One could compute the leading coefficient of other $\Pi_{N,s}$'s. The result will in general involve values of Schur functions (see for example Macdonald \cite{Mac95}). These polynomials are non-zero, symmetric and positive sums of monomials. Using this, one can show that in characteristic zero the leading coefficient of $\Pi_{N,s}$, and hence $\Pi_{N,s}$ itself, do not vanish. 
\end{rems} 

\section{Applications and examples}\label{sec:applex}  In this section, we discuss various applications of and examples related to Theorem~\ref{theo:main}.  As in the Introduction, we assume from now on that $d=\delta_2$. This implies that $\pi$ is separable, as one sees by decomposing $\pi$ as the composite of a purely inseparable and a separable morphism.  Thus we can apply the results of \S\S~\ref{sec:vals}--\ref{sec:DeltaNzero}.

\subsection{Torsion of small order} We begin with some simple observations concerning small values of $N$.
\begin{prop} \label{prop:smallorder} Let $N\geq 2$.
\case{a} If $N$ is a Weierstrass gap, then $X[N]=\{\infty\}$.

\case{b} Suppose $X$ contains a point of order $N$. Suppose further that $N_i=0$ for all $i\in \{1,2,\dots, d-1\}$. Then $N=d$.
\end{prop}

\begin{rems}
\case{i} If $N<d$, then $N$ is a Weierstrass gap, so we deduce that $X[N]=\{\infty\}$. 

\case{ii} Recall that by the Riemann-Roch theorem, if $N$ is a Weierstrass gap, then $N\leq 2g-1$. Similarly, the hypothesis on the $N_i$'s of \case{b} can only be satisfied if $N\leq 2g$. Thus Proposition~\ref{prop:smallorder} is only of use for small values of $N$. 

\case{iii} When $X$ is hyperelliptic and $\infty$ a Weierstrass point, we recover the fact that if $N\leq 2g$, then $X[N]=X[2]$ when $N$ is even and $X[N]=\{\infty\}$ when $N$ is odd (see \cite{Bo23}, Lemma~3.2 or Zarhin~\cite{Za19}).
\end{rems}

\begin{proof}  \case{a} Suppose $X[N]$ contains a point $\xi\neq \infty$ of order $M$ dividing $N$. Thus $M\geq 2$ and by the Abel-Jacobi theorem $k(X)$ contains a function $\beta$ with divisor $M([\xi]-[\infty])$ and hence the function $\beta^{\frac{N}{M}}$ has divisor $N([\xi]-[\infty])$, contradicting the hypothesis.  

\case{b} Since $X$ contains a point $\xi$ of order $N$, $k(X)$ contains a function $\alpha$ with divisor $N([\xi]-[\infty])$.  Now the hypothesis on the $N_i$'s implies that a $k$-basis of $\cO_X(N[\infty])$ is given by powers of $x$. Hence $\alpha$ is a polynomial in $x$, and since $v_\infty(x)=-d$ and $v_\infty(\alpha)=-N$, this polynomial must have degree $\frac{N}{d}$.  Hence $d$ divides $N$. Also, the only zero of $\alpha$ is at $\xi$ and it has order $N$, so necessarily $\alpha=c(x-x(\xi))^{\frac{N}{d}}$ for some $c\neq 0$. Hence $x-x(\xi)$ has divisor $d([\xi]-[\infty])$ and therefore $N=d$.  \end{proof}

Next we treat $d$-torsion. The following Proposition follows directly from Lemma~\ref{lem:RH} and the Abel-Jacobi theorem.

\begin{prop} \label{prop:dtorsion} We have $X[d]=\{\xi\in  \cR\mid e_\xi=d\}$. Furthermore,
\begin{equation*}
\Card{X[d]}\leq \frac{2(g+d-1)}{d-1}. 
\end{equation*}
\end{prop}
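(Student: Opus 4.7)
The plan is to use Abel--Jacobi to identify elements of $X[d]$ with divisors of specific functions in $\cO_X(d[\infty])$, exploit the fact that this space has dimension~$2$ by the very definition of $d=\delta_2$, and then bound the cardinality via the Hurwitz formula of Lemma~\ref{lem:RH}.

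More precisely, I would first observe that $\infty$ lies in the right-hand set (since $\infty \in \cR$ with $e_\infty = d$) and, trivially, in $X[d]$. For $\xi \in X(k)\setminus\{\infty\}$, Abel--Jacobi gives $\xi \in X[d]$ if and only if there exists $f\in k(X)$ with divisor $d([\xi]-[\infty])$. Such an $f$ lies in $\cO_X(d[\infty])$, and by the definition of $d$ (namely $d=\delta_2$), this space is $2$-dimensional and spanned by $1$ and $x$. Hence after rescaling $f = x - x(\xi)$ (the constant being forced by the requirement $f(\xi)=0$). The divisor of this function equals $\pi^*([x(\xi)]) - d[\infty]$, which coincides with $d[\xi]-d[\infty]$ precisely when $\pi^{-1}(x(\xi))=d[\xi]$, i.e.\ when $\pi$ is totally ramified at $\xi$ with $e_\xi=d$. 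The converse is immediate: if $\xi\in \cR$ and $e_\xi=d$, then $x-x(\xi)$ has divisor $d([\xi]-[\infty])$. This establishes the equality $X[d]=\{\xi\in \cR\mid e_\xi=d\}$.

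For the bound, I would apply Lemma~\ref{lem:RH}\,\case{i}, which gives
\begin{equation*}
2(g+d-1)=\sum_{\xi\in \cR}(r_\xi - 1).
\end{equation*}
For each $\xi\in X[d]$ we have $r_\xi \geq e_\xi = d$, so $r_\xi-1 \geq d-1$. Restricting the sum to $X[d]\subseteq \cR$ therefore yields
\begin{equation*}
(d-1)\,\Card{X[d]} \leq \sum_{\xi\in X[d]}(r_\xi-1) \leq \sum_{\xi\in \cR}(r_\xi-1) = 2(g+d-1),
\end{equation*}
and dividing by $d-1$ gives the claimed inequality.

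There is no real obstacle here: the proposition is essentially a direct consequence of the minimality of $d$ (which forces $\cO_X(d[\infty])$ to be spanned by $1$ and $x$) together with the Hurwitz bound already established. The only point requiring a moment's care is checking that the function realizing the $d$-torsion relation is necessarily (up to scalar) of the form $x-x(\xi)$, which is where the hypothesis $d=\delta_2$ is used.
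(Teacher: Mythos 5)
Your argument is correct and follows essentially the same route as the paper: identify $X[d]$ with the set of totally ramified points of $\pi$ using $d=\delta_2$ to force the $d$-torsion function to be a scalar multiple of $x-x(\xi)$, and then bound via Lemma~\ref{lem:RH}\,\case{i} using $r_\xi\geq e_\xi=d$. No gaps.
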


\begin{proof} Let $\xi\in X[d]$. If $\xi=\infty$, then certainly $e_\xi=d$ and $\xi\in \cR$. If $\xi\neq \infty$, then there exists $\alpha\in k(X)$ with divisor $d([\xi]-[\infty])$. Since $d=\delta_2$, $\alpha=\lambda(x-x(\xi))$ for some $\lambda\in k^\times$.  Hence $\xi\in \cR$ and $e_\xi=d$.  Conversely, let $\xi\in \cR$ be such that $e_\xi=d$. If $\xi=\infty$, then certainly $\xi\in X[d]$. If $\xi\neq \infty$, then $x-x(\xi)$ has a zero of order at least $d$ at $\xi$; since $x-x(\xi)$ has a unique pole at $\infty$ which is of order $d$ and  the divisor of a function has degree zero, this implies that the divisor of $x-x(\xi)$ is $d([\xi]-[\infty])$. Therefore $\xi\in X[d]$.

It follows that $\Card{X[d]}=\Card{\{\xi\in \cR\mid e_\xi=d\}}$.  Hence
\begin{equation*}
(d-1)\Card{X[d]}=\sum_{\xi\mid e_\xi=d}(e_\xi-1)\leq \sum_{\xi\in \cR}(e_\xi-1)\leq \sum_{\xi\in \cR}(r_\xi-1)=2(g+d-1)
\end{equation*}
and the Proposition follows. 
\end{proof}

\subsection{Simplifying the bound} We now simplify the result of Theorem~\ref{theo:main} in order to obtain a bound depending only on $N$ and $g$.  In particular, we prove Theorem~\ref{theo:intromain}. 

First note that since $\sD_N-N_0=\sum_{i=1}^{d-1}N_i$, we have $\sD_N-N_0\geq 0$.  Also, $r_\xi\geq e_\xi$ and $e_\xi\geq 2$ for all $\xi\in \cR$. Finally, since each $z_\gamma$ is finite away from $\infty$, we have, writing $\delta(\gamma)$ for the order $-v_\infty(z_\gamma)$ of the pole of $z_\gamma$ at $\infty$:
\begin{equation*}
\sum_{\eta_\in U}v_\eta(\Pi_{N,s})\leq (4g+2d-4)A_s+\sum_{\gamma=1}^{d-1}N_\gamma \delta(\gamma).
\end{equation*}

We now bound $A_s$. Again, we choose $d=\delta_2$. Then $N_0=\lfloor{\frac{N}{d}}\rfloor+1$ and, if $\gamma\geq 1$, then since there is no non-constant function in $\cO_X(X-\infty)$ with a pole of order $<d$, $N_\gamma\leq \frac{N}{d}$.  It follows that, if $\gamma\geq 1$, then
\begin{equation*}
S_{\gamma-1}\leq \Big(\frac{N}{d}+1\Big)+\sum_{\mu=1}^{\gamma-1}\frac{N}{d}=1+\frac{N}{d}\gamma.
\end{equation*}
Hence, 
\begin{equation*}
\sum_{\gamma=1}^{d-1}N_\gamma S_{\gamma-1}=\sum_{\gamma=1}^{d-1}\frac{N}{d}\Big(1+\frac{N}{d}\gamma\big)=\frac{N}{d}(d-1)+\frac{N^2}{d^2}\frac{d(d-1)}{2}\leq N+\frac{N^2}{2}. 
\end{equation*}
Therefore
\begin{equation*}
A_s\leq \sum_{i=N_0+1}^{\sD_N}(s_i-i)+ N+\frac{N^2}{2}. 
\end{equation*}

Next, we bound $\sum_{\gamma=1}^{d-1}N_\gamma \delta(\gamma)$.  Again, we bound $N_\gamma$ by $\frac{N}{d}$. By the Riemann-Roch theorem, every integer at least $g+1$ is the order of the pole at $\infty$ of some function in $\cO_X(X-\{\infty\})$, so $\delta(\gamma)\leq g+1+\gamma\leq g+d$. Hence
\begin{equation*}
\sum_{\gamma=1}^{d-1}N_\gamma \delta(\gamma)\leq (d-1)\frac{N}{d}(g+d)=N\big(1-\frac{1}{d}\big)(g+d)\leq N(g+d).
\end{equation*} 

\subsubsection{Proof of Theorem~\ref{theo:intromain}} We now specialize to the case of $\Delta_N$, so $s_i=i$ for all $i$. By Proposition~\ref{cor:crucialgeneric}, the vanishing of $\Delta_N$  is equivalent to $(\sD_N)_*X\subseteq W^-_{N-\sD_N}$. Thus we can suppose from now on that $\Delta_N\neq 0$.  For completeness, we state the generalization of Theorem~\ref{theo:intromain} to arbitrary $N$:
\begin{prop} \label{prop:genintromain} Let $N\geq 2$. If $\Delta_N\neq 0$, then
\begin{equation} \label{eq:genintromain}
\Card{(X[N])}\leq  \frac{1}{N-\sD_N+1}\big((3g-1)N^2+(8g-1)N\big)+4g.
\end{equation}
\end{prop}
Since $\sD_N=N-g+1$ when $N\geq 2g-1$, this implies Theorem~\ref{theo:intromain}.

\begin{proof}[Proof of  Proposition~\ref{prop:genintromain}]. Since $\Delta_N\neq 0$ and $s_{\sD_N}=\sD_N$, Proposition~\ref{prop:orderzeros} implies that $\Delta_N$ has a zero of order at least $N-\sD_N+1$ at any point of $X[N]-\cR$. We deduce that
\begin{align*}
\Card{(X[N]-\cR)}&\leq \frac{1}{N-\sD_N+1}\big((4g+2d-4)(N+\frac{N^2}{2})+N(g+d)\big)\\
&\leq \frac{1}{N-\sD_N+1}\big((6g-2)(N+\frac{N^2}{2})+(2g+1)N\big).
\end{align*}
Since $\rho\leq 2(g+d-1)\leq 4g$ by Lemma~\ref{lem:RH}, $(\ref{eq:genintromain})$ follows. 
\end{proof}

\subsection{The worst case} \label{subsec:worst} We begin by analysing what Theorem~\ref{theo:main} can tell us for a general choice of $X$ and $\infty$.  We keep the notation already introduced. For each residue class $\gamma\in \bZ/d\bZ$, we write $\delta(\gamma)=-v_\infty(z_\gamma)$, the order of the pole of $z_\gamma$ et $\infty$. 

\begin{prop} Let $s$ be such that $\Pi_{N,s}\neq 0$. Then
\begin{equation*}
\Card{(X[N]-\cR)}\leq \frac{1}{N+1-s_{\sD_N}}\Big(A_s(4g+2d-4)+\sum_{\gamma=1}^{d-1}N_\gamma\delta(\gamma)\Big).
\end{equation*}
\end{prop}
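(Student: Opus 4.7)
The plan is to combine Theorem~\ref{theo:main} (upper bound on the zero divisor degree of $\Pi_{N,s}$ on $U$) with Proposition~\ref{prop:orderzeros}\case{i} (lower bound on the order of vanishing of $\Pi_{N,s}$ at each point of $X[N]\setminus\cR$). Specifically, each point in $X[N]\setminus\cR = X[N]\cap U$ contributes at least $N+1-s_{\sD_N}$ to $\sum_{\eta\in U}v_\eta(\Pi_{N,s})$, so an upper bound of the form $B$ on this sum immediately gives $\Card{(X[N]\setminus \cR)}\leq B/(N+1-s_{\sD_N})$. Note $N+1-s_{\sD_N}\geq 1$ since $s_{\sD_N}\leq N$.

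The substantive step is to show the bound of Theorem~\ref{theo:main} simplifies to $A_s(4g+2d-4)+\sum_{\gamma=1}^{d-1}N_\gamma\delta(\gamma)$ by checking that each of the three "correction" terms has the correct sign. First, $A_s\geq 0$: both $\sum_{i>N_0}(s_i-i)$ and $\sum_\gamma N_\gamma S_{\gamma-1}$ are manifestly non-negative (recall $s$ is strictly increasing and the $N_\gamma$, $S_\gamma$ are counting quantities). Since $e_\xi\geq 2$ for $\xi\in\cR$, the term $-A_s\sum_{\xi\in\cR}(e_\xi-2)$ is $\leq 0$. Similarly $\sD_N-N_0=\sum_{i=1}^{d-1}N_i\geq 0$ and $r_\xi\geq e_\xi$ for all $\xi\in\cR$, so $-(\sD_N-N_0)\sum_{\xi\in\cR}(r_\xi-e_\xi)\leq 0$.

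For the last term, the key observation is that each $z_\gamma$ is regular on $X\setminus\{\infty\}$ by construction, so $v_\xi(z_\gamma)\geq 0$ for $\xi\in \cR\setminus\{\infty\}$, while $v_\infty(z_\gamma)=-\delta(\gamma)$. Therefore $\sum_{\xi\in\cR}v_\xi(z_\gamma)\geq -\delta(\gamma)$, hence $-\sum_{\xi\in\cR}v_\xi(z_\gamma)\leq \delta(\gamma)$. Summing the resulting weighted inequalities gives
\begin{equation*}
\sum_{\eta\in U}v_\eta(\Pi_{N,s})\leq A_s(4g+2d-4)+\sum_{\gamma=1}^{d-1}N_\gamma\delta(\gamma),
\end{equation*}
and dividing by $N+1-s_{\sD_N}$ yields the claimed bound.

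I do not expect a real obstacle here: the proposition is essentially a book-keeping consequence of the two preceding deep results. The only thing that needs care is the sign analysis, particularly ensuring that $A_s\geq 0$ (so that dropping the $\sum(e_\xi-2)$ term genuinely weakens the bound) and that the pole-only-at-$\infty$ property of $z_\gamma$ is used correctly to convert $-\sum_{\xi\in\cR}v_\xi(z_\gamma)$ into $\delta(\gamma)$.
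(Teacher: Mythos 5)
Your proof is correct and follows essentially the same route as the paper's: apply Theorem~\ref{theo:main}, check the signs of the three correction terms (via $A_s\geq 0$, $e_\xi\geq 2$, $\sD_N-N_0\geq 0$, $r_\xi\geq e_\xi$, and $\sum_{\xi\in\cR}v_\xi(z_\gamma)\geq -\delta(\gamma)$ because $z_\gamma$ has its only pole at $\infty$), then invoke Proposition~\ref{prop:orderzeros}\case{i} to convert the bound on $\sum_{\eta\in U}v_\eta(\Pi_{N,s})$ into a count.
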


\begin{proof}
We apply Theorem~\ref{theo:main}. Since $s_i\geq i$ for all $i$, $A_s\geq 0$. Also, $e_\xi\geq 2$ for all $\xi\in \cR$, so $\sum_{\xi}(e_\xi-2)\geq 0$. Next, $\sD_N-N_0=\sum_{i=1}^{N-1}N_i\geq 0$ and $r_\xi\geq e_\xi$ for all $\xi\in \cR$, so $(\sD_N-N_0)\sum_{\xi}(r_\xi-e_\xi)\geq 0$. Finally, since $\infty$ si the only pole of $z_\gamma$, $\sum_{\xi}v_\xi(z_\delta)\geq -\delta(\gamma)$.  Thus Theorem~\ref{theo:main} implies that
\begin{equation*}
\sum_{\eta\in U}v_\eta(\Pi_{N,s})\leq A_s(4g+2d-4)+\sum_{\gamma=1}^{d-1}N_\gamma\delta(\gamma).
\end{equation*}
The Proposition now follows from Proposition~\ref{prop:orderzeros}.
\end{proof}

The worst bound is obtained by choosing $s=s^{(0)}$, where $s^{(0)}_i=i+N-\sD_N$ whenever $i\geq N_0+1$, since then $s_i\leq s^{(0)}_i$ for all $s$ and all $i$. Hence $\frac{1}{N+1-s_{\sD_N}}\leq \frac{1}{N+1-s^{(0)}_{\sD_N}}=1$ and $A_s\leq A_{s^{(0)}}=(N-N_0)(N-\sD_N)+\sum_{\gamma=1}^{d-1}N_\gamma S_{\gamma}$. By Corollary~\ref{cor:crucialgeneric}, at least one $\Pi_{N,s}$ is non zero, so we deduce the following:

\begin{cor} \label{cor:worst}
We always have
\begin{equation*}
\Card{(X[N]-\cR)}\leq \Big((N-N_0)(N-\sD_N)+\sum_{\gamma=1}^{d-1}N_\gamma S_{\gamma}\Big)(4g+2d-4)+\sum_{\gamma=1}^{d-1}N_\gamma\delta(\gamma). 
\end{equation*}
\end{cor}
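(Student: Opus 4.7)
The plan is to obtain the bound from the preceding Proposition applied to any $s$ with $\Pi_{N,s}\neq 0$, and then eliminate the dependence on $s$ by replacing every $s$-dependent quantity with its ``worst'' value, attained by the specific sequence $s^{(0)}$.

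First, Corollary~\ref{cor:crucialgeneric} guarantees that $\cM_N$ has rank $\sD_N$, so at least one of its $\sD_N\times \sD_N$ minors, say $\Pi_{N,s}$, is non-zero. Applying the preceding Proposition to this particular $s$ yields
$$\Card{(X[N]-\cR)}\leq \frac{1}{N+1-s_{\sD_N}}\Big(A_s(4g+2d-4)+\sum_{\gamma=1}^{d-1}N_\gamma\delta(\gamma)\Big).$$

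To remove the dependence on $s$, I would compare $s$ with $s^{(0)}$. Any admissible strictly increasing sequence satisfies $s_{\sD_N}\leq N=s^{(0)}_{\sD_N}$, so that $\frac{1}{N+1-s_{\sD_N}}\leq 1$. Moreover, the quantity $A_s$ depends only on the entries of $s$ with index $i\geq N_0+1$, and for these indices $s^{(0)}_i=i+N-\sD_N$ realises the largest value consistent with strict monotonicity and the bound $s_i\leq N$; hence $A_s\leq A_{s^{(0)}}$. A direct computation gives
$$A_{s^{(0)}}=(\sD_N-N_0)(N-\sD_N)+\sum_{\gamma=1}^{d-1}N_\gamma S_{\gamma-1}.$$

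Finally, I would weaken this further using $\sD_N\leq N$ to replace $\sD_N-N_0$ by $N-N_0$, together with the trivial inequality $S_{\gamma-1}\leq S_\gamma$, to arrive at exactly the stated bound. The whole argument is a purely combinatorial monotonicity consequence of the previous Proposition, so there is no essential obstacle; the only subtlety is remembering that Corollary~\ref{cor:crucialgeneric} is what guarantees the existence of at least one non-vanishing $\Pi_{N,s}$ and thus makes the Proposition applicable in the first place.
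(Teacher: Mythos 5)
Your proposal is correct and follows essentially the same route as the paper: pick a nonvanishing $\Pi_{N,s}$ via Corollary~\ref{cor:crucialgeneric}, invoke the preceding Proposition for that $s$, and then dominate both $s$\nobreakdash-dependent factors using $s_i\leq s^{(0)}_i$ for every admissible $s$. In fact your computation is a little more careful than the paper's: you correctly find $A_{s^{(0)}}=(\sD_N-N_0)(N-\sD_N)+\sum_{\gamma=1}^{d-1}N_\gamma S_{\gamma-1}$ and then enlarge it using $\sD_N-N_0\leq N-N_0$ (with $N-\sD_N\geq 0$) and $S_{\gamma-1}\leq S_\gamma$, whereas the paper writes ``$A_{s^{(0)}}=(N-N_0)(N-\sD_N)+\sum_{\gamma=1}^{d-1}N_\gamma S_\gamma$'', an expression that in fact exceeds $A_{s^{(0)}}$ by $(N-\sD_N)^2+\sum_{\gamma=1}^{d-1}N_\gamma^2$; the intended relation there is ``$\leq$'' rather than ``$=$'', and your explicit weakening step supplies exactly the missing justification.
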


For fixed $g$, $d$, this bound is usually much worse than that of Proposition~\ref{prop:oldbound} since it leads to a bound of $4g+2d-\frac{7}{2}$ on the coefficient of $N^2$.  However if $d$ is fixed and $N\leq g$ say, it becomes a quadratic bound. See also Remark~\ref{rem:worst} below. 

\subsection{The case of tame ramification} \begin{prop}\label{prop:tame}  Suppose that $\pi$ is tamely ramified. Then
\begin{equation*}
\sum_{\eta\in U} v_\eta(\Pi_{N,s})\leq A_s(\rho+2g-2)-\sum_{\gamma=1}^{d-1}N_\gamma\sum_{\xi\in \cR}v_\xi(z_\gamma).
\end{equation*}
\end{prop}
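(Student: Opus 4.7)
The plan is to derive the bound as a direct specialization of Theorem~\ref{theo:main}. The tameness hypothesis feeds into that bound in two places, each handled by a one-line reduction, so there is no essential new obstacle—the work is purely algebraic simplification.

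First I would observe that by definition tame ramification means $r_\xi = e_\xi$ for every $\xi\in \cR$. Consequently the middle term in the bound of Theorem~\ref{theo:main}, namely $(\sD_N - N_0)\sum_{\xi\in \cR}(r_\xi - e_\xi)$, is simply zero and can be dropped. The only remaining task is to rewrite the coefficient $4g + 2d - 4 - \sum_{\xi\in \cR}(e_\xi - 2)$ of $A_s$ in the desired form $\rho + 2g - 2$.

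For this I would invoke Lemma~\ref{lem:RH}~\case{i}, which under tameness becomes the classical Hurwitz formula
\begin{equation*}
\sum_{\xi\in \cR}(e_\xi - 1) = 2(g + d - 1).
\end{equation*}
Subtracting $\rho = \sum_{\xi\in \cR} 1$ from both sides yields
\begin{equation*}
\sum_{\xi\in \cR}(e_\xi - 2) = 2(g+d-1) - \rho,
\end{equation*}
and substituting this expression gives
\begin{equation*}
4g + 2d - 4 - \sum_{\xi\in \cR}(e_\xi - 2) = 4g + 2d - 4 - 2(g+d-1) + \rho = 2g - 2 + \rho,
\end{equation*}
exactly as required.

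Combining these two simplifications with the bound of Theorem~\ref{theo:main} produces the Proposition. There is no serious difficulty; the only thing to be careful about is ensuring that the non-negativity of $(\sD_N - N_0)\sum_{\xi}(r_\xi - e_\xi)$ is indeed used only through its vanishing (and not, say, as an inequality in the wrong direction), which is automatic under the hypothesis $r_\xi = e_\xi$.
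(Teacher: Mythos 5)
Your proof is correct and follows essentially the same route as the paper: you drop the $(\sD_N-N_0)\sum_{\xi}(r_\xi-e_\xi)$ term since it vanishes under tameness, and then use Lemma~\ref{lem:RH}~\case{i} (with $r_\xi=e_\xi$) to rewrite $\sum_{\xi\in\cR}(e_\xi-2)=2(g+d-1)-\rho$ and hence $4g+2d-4-\sum_{\xi}(e_\xi-2)=\rho+2g-2$. The algebra checks out and matches the paper's argument.
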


\begin{proof} Since $\pi$ is tamely ramified, $r_\xi=e_\xi$ for all $\xi$, so we have
\begin{equation*}
\sum_{\eta\in U} v_\eta(\Pi_{N,s})\leq A_s\big(4g+2d-4-\sum_{\xi\in \cR}(e_\xi-2)\big)- \sum_{\gamma=1}^{d-1}N_\gamma\sum_{\xi\in \cR}v_\xi(z_\gamma).
\end{equation*}
Here $\sum_{\xi\in \cR}(e_\xi-2)=-\rho+\sum_{\xi\in \cR}(e_\xi-1)=-\rho+2(g+d-1)$ and the result follows.
\end{proof}

\subsection{Hyperelliptic curves with base point a Weierstrass point} \label{subsecHCWP} We now show how Theorem~\ref{theo:main} can be used to recover some of the results of \cite{Bo23}. Suppose therefore that $X$ is hyperelliptic curve and $\infty$ a Weierstrass point.   Thus $d=2$ and, since $e_\xi\leq d$ for all $\xi\in \cR$, we in fact have $e_\xi=2$ for such $\xi$.  Also, $\sD_N=N_0+N_1$. Using Lemma~3.2 of \cite{Bo23}, we find that $N_0=\lfloor{\frac{N}{2}}\rfloor+1$, and 
\begin{equation*}
N_1=\begin{cases} 0 &\text{if $N\leq 2g$}\\
\lfloor{\frac{N+1-2g}{2}}\rfloor &\text{if $N\geq 2g+1$} 
\end{cases}
\end{equation*}
We therefore suppose $N\geq 2g+1$ from now on.  Thus
\begin{equation} \label{eq:Ashyperelliptic}
A_s=\sum_{i=N_0+1}^{N_0+N_1}(s_i-i)+N_1S_0=\sum_{i=N_0+1}^{N_0+N_1}(s_i-i)+N_0N_1
\end{equation}

\subsubsection{The case $p\neq 2$} In this case, $\pi$ is tamely ramified and we can choose a plane affine model of $X-\{\infty\}$ of the form $y^2=f(x)$, where $f$ is monic of degree $2g+1$ without multiple zeros. Then $\cR=\{\infty\}\cup \{(x_0,0)\mid f(x_0)=0\}=X[2]$, so that $\rho=2g+2$. Furthermore $y$ has a unique pole at $\infty$ of order $2g+1$ and its vanishing locus is just the set of finite points of $\cR$. We can therefore choose $z_1=y$, so by what has just been said and the fact that degree of the divisor of a function in zero, $\sum_{\xi\in \cR}v_\xi(z_1)=0$.  Therefore Proposition~\ref{prop:tame} gives
\begin{equation*}
\sum_{\eta\in U}v_\eta(\Pi_{X,s})\leq A_s(\rho+2g-2)=4gA_s
\end{equation*}
and therefore, by Proposition~\ref{prop:orderzeros},
\begin{equation*}
\Card{(X[N]-\cR)}\leq  \frac{1}{N+1-s_{\sD_N}}4gA_s.
\end{equation*}

Since $\cR=X[2]$, we see using (\ref{eq:Ashyperelliptic}) that this bound is the same (up to notation) as that of Theorem~3.8~\case{b} of \cite{Bo23}. 

\subsubsection{The case $p=2$} We can obtain the same result when $p=2$, but to do this we have to appeal directly to Theorem~\ref{theo:main}. Now $X-\{\infty\}$ has an affine model of the form $y^2+Q(x)y=P(x)$, where $P$ is monic of degree $2g+1$ and $Q\neq 0$ has degree at most $g$. Let $\pi:X\to \bP^1$ be the morphism defined by $x$. We have  $e_\xi=2$ for all $\xi\in \cR$.  We need to compute $\rho$ and the values of $r_\xi$ for all $\xi\in \cR$. This is achieved by following Lemma, which is probably well-known.

\begin{lemma} \label{lem:hyperellipticwild}
\case{i} We have $\cR=\{\infty\}\cup \{\xi=(x_0,y_0)\mid Q(x_0)=0\}$. Hence 
\begin{equation*}
\rho=1+\Card{\{x_0\in k\mid Q(x_0)=0\}}.
\end{equation*}

\case{ii} Let $\xi\in \cR$.  We have $r_\infty=2g+3-2\deg{Q}$ and, if $\xi=(x_0,y_0)\neq \infty$, then $r_\xi =2m_\xi+1$, where $m_\xi$ is the multiplicity of the zero of $Q$ at $x_0$. 
\end{lemma}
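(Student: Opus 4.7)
The plan is to differentiate the defining equation $y^2+Q(x)y=P(x)$ in characteristic~$2$, yielding the key identity
\begin{equation*}
Q(x)\,dy = (P'(x)+Q'(x)y)\,dx
\end{equation*}
together with $\partial F/\partial y = Q(x)$. Part~\case{i} then falls out immediately: a finite point $(x_0,y_0)$ of $X$ is ramified for $\pi$ iff $Q(x_0)=0$, and for each such $x_0$ the equation $y_0^2=P(x_0)$ has a unique solution because Frobenius is bijective on~$k$; smoothness at such a point forces $P'(x_0)+Q'(x_0)y_0\neq 0$. The point $\infty$ belongs to $\cR$ since $\pi$ is totally ramified there.

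For part~\case{ii} at a finite $\xi=(x_0,y_0)\in\cR$, the plan is to establish the three valuations
\begin{equation*}
v_\xi(Q(x))=2m_\xi,\qquad v_\xi(P'(x)+Q'(x)y)=0,\qquad v_\xi(dy)=0,
\end{equation*}
whence the differential identity yields $v_\xi(dx)=2m_\xi$ and hence $r_\xi = v_\xi(dx)+1 = 2m_\xi+1$ by the leading-term computation already used in the proof of Lemma~\ref{lem:RH}. The first two equalities are immediate from the definition of $m_\xi$ and smoothness; the crux is $v_\xi(dy)=0$, which I would derive by showing that $y-y_0$ is itself a uniformizer at $\xi$. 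Substituting into the curve equation and using $y_0^2=P(x_0)$ gives
\begin{equation*}
(y-y_0)^2+Q(x)(y-y_0) = P(x)-P(x_0)+Q(x)y_0,
\end{equation*}
and a short case analysis on whether $y_0=0$, whether $P'(x_0)=0$, and whether $m_\xi=1$ or $m_\xi\geq 2$ shows the right-hand side has valuation exactly~$2$: any potential cancellation between the two terms on the right is prevented by the smoothness inequality $P'(x_0)+Q'(x_0)y_0\neq 0$. Comparing the two sides forces $v_\xi(y-y_0)=1$, since a larger value would push the left-hand side to valuation $\geq 4$.

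For $\xi=\infty$ one applies the same differential identity. The leading term of $P'(x)$ in characteristic~$2$ is $x^{2g}$, so $v_\infty(P'(x))=-4g$, while $v_\infty(Q'(x)y)\geq -2\deg Q-2g+1>-4g$ since $\deg Q\leq g$; hence $v_\infty(P'(x)+Q'(x)y)=-4g$. For $v_\infty(dy)$, note that $1/y$ has a zero of odd order $2g+1$ at $\infty$, so the same leading-term trick applied to $1/y$ gives $v_\infty(d(1/y))=2g$; combined with $dy=y^2\,d(1/y)$ (valid in characteristic~$2$) this yields $v_\infty(dy)=-2g-2$. The differential identity then gives $v_\infty(dx)=2g-2\deg Q-2$, and the formula $v_\infty(dx)=r_\infty-1-2d$ from the proof of Lemma~\ref{lem:RH} yields $r_\infty = 2g+3-2\deg Q$. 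The main obstacle throughout is the uniformizer claim $v_\xi(y-y_0)=1$ at finite ramified points; the computation at infinity is routine valuation bookkeeping once one knows to apply the leading-term trick to $1/y$ rather than~$x$.
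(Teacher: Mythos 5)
Your proof is correct, and it takes a genuinely different route from the paper's. Where you differentiate the defining relation to get $Q(x)\,\diff{y}=(P'(x)+Q'(x)y)\,\diff{x}$ and read off $r_\xi$ as the order of the differential $\diff{x}$ via the identities $v_\xi(\diff{x})=r_\xi-1$ (finite $\xi$) and $v_\infty(\diff{x})=r_\infty-1-2d$ from the proof of Lemma~\ref{lem:RH}, the paper instead carries out an explicit power-series computation at finite $\xi$: after translating so $x_0=y_0=0$, it writes $x=\sum_{i\geq 1}A_iy^i$ in $\hat{\cO}_{X,\xi}$, substitutes into the curve equation, and shows successively that $A_1=A_3=\cdots=A_{2m}=0$ while $A_2,\,A_{2m+1}\neq 0$; the value of $r_\infty$ is then deduced from the Hurwitz identity $2g+2=\sum_{\xi\in\cR}(r_\xi-1)$ rather than computed locally. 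Your differential method buys a uniform local treatment of the finite and infinite cases, at the price of having to check $v_\xi(y-y_0)=1$ directly; that step can be streamlined --- expanding the right-hand side of $(y-y_0)^2+Q(x)(y-y_0)=P(x)-P(x_0)+Q(x)y_0$ in powers of $x-x_0$, the linear coefficient is $P'(x_0)+Q'(x_0)y_0$, nonzero by smoothness, so the valuation is $v_\xi(x-x_0)=2$ at once, with no case analysis needed --- but the conclusion is the same, and both arguments are sound.
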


\begin{proof} \case{i} Since $\pi$ has degree $2$, finite points $\xi=(x_0,y_0)$ belong to $\cR$ if and only if $y_0$ is the \emph{unique} element of $k$ such that $y_0^2+Q(x_0)y_0=P(x_0)$. If $y_1$ is the second solution of $y^2+Q(x_0)y=P(x_0)$, then $y_0+y_1=-Q(x_0)$ so (since $p=2$), $y_1=y_0+Q(x_0)$. Thus $y_1=y_0$ if and only if $Q(x_0)=0$.

\case{ii} Its suffices to consider the case $\xi\neq \infty$. Indeed, once this is known, we find by the Riemann-Hurwitz formula that
\begin{equation*}
2g+2=\sum_{\xi\in \cR}(r_\xi-1)=r_\infty-1+\sum_{\xi\neq \infty}(r_\xi-1)=r_\infty-1+2\sum_{\xi\neq \infty}m_x=r_\infty-1+2\deg{Q},
\end{equation*}
from which the value of $r_\infty$ follows. 

So, suppose $\xi=(x_0,y_0)$. After a translation, we can suppose $x_0=y_0=0$. Then $Q(0)=0$ and also $P(0)=0$. Now $y$ is a local parameter at $\xi$, so we can write $x=\sum_{i\geq 1}A_iy^i$ in the completed local ring $\hat{\cO}_{X,\xi}$.  Write $f(x,y)=y^2+Q(x)y-P(x)$. Since $X$ is smooth and $\frac{\partial f}{\partial y}(0,0)=Q(0)=0$, we must have $\frac{\partial f}{\partial x}(0,0)=P'(0)\neq 0$.  Thus from
\begin{equation*}
y^2+(Q'(0)A_1y+O(y^2))y=P'(0)(A_1y+A_2y^2)+ D_2P(0)A_1y^2+O(y^3)
\end{equation*}
we see that $A_1=0$, then that $A_2=P'(0)^{-1}$ si non-zero. 

Write $m=m_\xi$. We have $Q(x)=x^m(D_mQ(0)+O(x))$  where $D_mQ(0)\neq 0$. Thus
\begin{equation*}
y^2+D_mQ(0)(A_2y^2+O(y^3))^my=P(x),
\end{equation*}
and the left hand side is $y^2+D_mQ(0)A_2^{2m+1}y^{2m+1}+O(y^{2m+2})$ where the coefficient of $y^{2m+1}$ is non-zero.  Since $A_2P'(0)=1$, the right hand side develops as $y^2+B_3y^3+\cdots +B_ny^n+\cdots$ where we must have $B_3=B_4=\cdots  = B_{2m}=0$ and $B_{2m+1}\neq 0$. From this we deduce successively that $A_3=0$, $A_4=0$, \dots,  $A_{2m}=0$ and $A_{2m+1}\neq 0$. 

Since $p=2$ and $2m+1$ is odd, we deduce that $r_\xi=2m+1$ as claimed.
\end{proof}

We can now apply Theorem~\ref{theo:main}. We choose $z_1=y$. Since $e_\xi=2$ for all $\xi\in \cR$, we find, using Lemma~\ref{lem:hyperellipticwild} and $\sum_{\xi\neq \infty}m_x=\deg{Q}$, that
\begin{align*}
\sum_{\eta\in U}v_\eta(\Pi_{N,s})&\leq 4gA_s-N_1\big(\sum_{\xi\in \cR}(r_\xi-e_\xi)\big)-N_1\sum_{\xi\in \cR}v_\xi(y)\\
&=4gA_s-N_1(r_\infty-2)-N_1\sum_{\xi\neq \infty}(r_\xi-2)-N_1\big(v_\infty(y)+\sum_{\xi\neq \infty}v_\xi(y)\big)\\
&=4gA_s-N_1(2g+1-2\deg{Q})-N_1\sum_{\xi\neq \infty}(2m_\xi-1)+(2g+1)N_1\\
&\hskip78mm -N_1\sum_{\xi\neq \infty}v_\xi(y)\\
&=4gA_s+N_1\big(\rho-1-\sum_{\xi\neq \infty}v_\xi(y)\big).
\end{align*}
Since $v_\infty(y)=-(2g+1)$ and $v_\infty(x)=-2$, we can choose for $z_1$ any function of the form $y+S(x)$ where $S(x)$ is a polynomial of degree at most $g$. Since $\rho-1\leq g$, we can choose $S$ so that $z_1$ vanishes at all the finite $\xi\in \cR$. Then $\sum_{\xi\neq \infty}v_\xi(y)\geq \rho-1$, so we conclude that
\begin{equation*}
\sum_{\eta\in U}v_\eta(\Pi_{N,s})\leq 4gA_s
\end{equation*}
as before.  Since $\cR=X[2]$, this leads to the same bound on $\Card{(X[N]-X[2])}$ as before. 

\begin{rem}\label{rem:worst} Applying the {\lq\lq}worst case{\rq\rq} strategy leading to Corollary~\ref{cor:worst} leads to the bound (still assuming $X$ hyperelliptic and $\infty$ a Weierstrass point)
\begin{equation*}
\Card{(X[N]-X[2])}\leq \begin{cases} g(N^2-(2g)^2)  & \text{if $N$ is even}, \\
                                                         g(N^2-(2g-1)^2) & \text{if $N$ is odd}. \end{cases}
\end{equation*}
In general, this is still weaker than Proposition~\ref{prop:oldbound}, but taking for example $N=2g+1$ gives $\Card{(X[2g+1])}\leq 8g^2$ which is better (see Proposition~3.10 of \cite{Bo23}).   
\end{rem}

\subsection{Genus two curves with base point not a Weierstrass point} \label{subsecg2notWP}  When $g=2$, the term $(3-\frac{1}{g})N^2$ of the bound in Theorem~\ref{theo:intromain} is weaker than the term $g(N+1)^2$ of Proposition~\ref{prop:oldbound}. When $\infty$ is a Weierstrass point, the results of the previous subsection improve this, so suppose that $\infty$ is not a Weierstrass point. Then by the Riemann-Roch theorem, $\delta_2=3$. Hence we  take $d=3$.  Also, Riemann-Roch implies that $\delta_n =n+1$ for all $n\geq 2$. There cannot be any points of order $2$, so suppose $N\geq 3$. Then $N_0=\lfloor{\frac{N}{3}}\rfloor+1$ and $N_i=\lfloor{\frac{N}{3}}\rfloor$ for $i=1$, $2$. Thus
\begin{equation*}
A_s\leq \sum_{i=N_0+1}^{\sD_N}(s_i-i)+\frac{N}{d}(d-1)+\frac{N^2}{d^2}\frac{d(d-1)}{2}=\sum_{i=N_0+1}^{\sD_N}(s_i-i)+\frac{2N}{3}+ \frac{N^2}{3}
\end{equation*}
(and equality holds when $3$ divides $N$). 

Next we consider the term $4g+2d-4-\sum_{\xi\in\cR}(e_\xi-2)$.  Since $e_\infty=3$, this is at most $4g+2d-4-1=9$.  Also, $\sD_N=N-1$ for all $N\geq 3$. Thus, when $\Delta_N\neq 0$, since $g=2$, we have

\begin{align*}
\Card{(X[N]-\cR)}\leq \frac{3N^2}{2}+3N&-\frac{1}{2}\Big(N-\Big\lfloor{\frac{N}{3}}\Big\rfloor-2\Big)\Big(\sum_{\xi\in \cR}(r_\xi-e_\xi)\Big)\\
&-\frac{1}{2}\Big\lfloor{\frac{N}{3}}\Big\rfloor\Big(\sum_{\xi\in \cR}(v_\xi(z_1)+v_\xi(z_2))\Big)\Big).
\end{align*}

Note that, since $\cR$ contains the unique pole $\infty$ of $z_1$ and $z_2$, $\sum_{\xi\in \cR}(v_\xi(z_1)+v_\xi(z_2))\leq 0$ so, at least in the case of tame ramification, the linear term in $N$ is always strictly positive. So asymptotically in $N$, our bound is the same as Pareschi's bound $\frac{3}{2}N^2$ \cite{Par21}, but there is an extra linear term and an extra $\rho\leq 2(g+d-1)=8$:

\begin{align*}
\Card{(X[N])}\leq \frac{3N^2}{2}+3N&-\frac{1}{2}\Big(N-\Big\lfloor{\frac{N}{3}}\Big\rfloor-2\Big)\Big(\sum_{\xi\in \cR}(r_\xi-e_\xi)\Big)\\
&-\frac{1}{2}\Big\lfloor{\frac{N}{3}}\Big\rfloor\Big(\sum_{\xi\in \cR}(v_\xi(z_1)+v_\xi(z_2))\Big)\Big)+8.
\end{align*}

\subsection{Some specific examples} \label{sub:examples} We end the paper by considering some specific examples.

\subsubsection{Examples where $\cM_N^{(r)}$ is not of maximal rank for all $r\geq 1$} \label{ex:MNrnotmax} Suppose $X$ and $N$ are such that $(1-N)_*X=X$. Then  $(N-r)_*X\subseteq  W_r^{-}$ for all $r=1$, \dots, $N-\sD_N$, so by Corollary~\ref{cor:crucialgeneric}, $\cM_N^{(r)}$ does not have maximal rank for such $r$. The converse argument also holds, so the condition that $\cM_N^{(r)}$ is not of maximal rank for all $r\geq 1$ is equivalent to $(1-N)_*X=X$.  It is well-known that this can happen only if $p>0$, $p$ is purely inseparable for $X$ and $N=p^t+1$ for some $t\geq 1$  (see for example the proof of Lemma~6.3 of \cite{Bo23}). 

Suppose for example that $X$ is defined over a finite field $\bF_{p^s}$ with $p^s$ elements and that the Frobenius endomorphism $\varphi$ of $J$ over $\bF_{p^s}$ acts by multiplication by $-p^t$ for some integer $t\geq 1$. Then by a well-known result of Weil, $s$ is even and $s=2t$. Furthermore, $-p^t_*X=X$ so we are in the above situation. Also, if $N=p^t+1$, then $J[N]=J(\bF_{p^{2t}})$, so $X[N]=X(\bF_{p^{2t}})$. Furthermore, the characteristic polynomial of $\varphi$ is $(T+p^t)^{2g}$, so
\begin{equation*}
\Card{(X[N])}=\Card{(X(\bF_{p^{2t}})}=p^{2t}+1+2g p^{t}=N^2+2(g-1)(N-1).
\end{equation*}

In this example, there are infinitely many values of $N$ for which $\Delta_N=0$. Despite this, the size of $X[N]$ only grows quadratically in $N$ and $g$. This is therefore a motivating example for the question about quadratic growth asked in the Introduction.

A concrete example is given by the curves $y^2+y=x^\ell$, where $\ell$ is a prime and $p$ a primitive root $\pmod{\ell}$ (see for example \cite{Bo23}~Proposition~5.7). Here $g=\frac{\ell-1}{2}$ and $t$ is any integer satisfying $t\equiv g\pmod{2g}$. 

By a result of Tate \cite{Ta66}, the hypothesis that $\varphi$ acts as multiplication by $-p^{t}$ or $p^t$ implies that $J$ is isogenous (over an algebraic closure of the base field) to a power of a supersingular elliptic curve. In fact, this last condition is equivalent to some power of $\varphi$ acting as a power of $p$. Thus, in the above example, $J$ is always isogenous to a power of a supersingular elliptic curve. In Example~\ref{ex:g9family}, we shall give examples where $\Delta_N=0$ and this is not the case.

\subsubsection{The curve $y^3-y=x^4$} We suppose $p\neq 2$.  The projective closure $X$ in $\bP^2$ is a smooth proper connected curve of genus $3$ with a unique point $\infty$ outside the affine curve. Since $X-\{\infty\}$ is smooth, $k[x,y]/(y^3-y-x^4)$ is integrally closed of dimension one, from which it follows that every function in $k(X)$ finite away from $\infty$ is a polynomial in $x$ and $y$.  Using this, one deduces that $\delta_2=3$, $\delta_3=4$ and $\delta_n=n+2$ when $n\geq 4$.  We study $X[N]$ for $N\in \{2,3,4,5\}$. 

Firstly $X[2]=\{\infty\}$ by Proposition~\ref{prop:smallorder} or the remark following it.

By Proposition~\ref{prop:dtorsion}, we have $\Card{(X[3])}\leq 5$. In fact, it is easy to work out $X[3]$ by applying the strategy of the proof of this Proposition: the set $\cR-\{\infty\}$ is the set of finite points $(x_0,y_0)$ where the covering $\pi:X\to \bP^1$ is ramified; since the discriminant of the polynomial $y^3-y-x_0^4$ is $-27x_0^8+4$, $\cR-\{\infty\}$ is the set of those $(x_0,y_0)$ such that $27x_0^8=4$ and $y_0$ is a multiple root of $y^3-y-x_0^4$. Thus $X[3]-\{\infty\}$ is the set of these $(x_0,y_0)$ for which $y^3-y-x_0^4=(y-y_0)^3$. From this it follows easily that in fact $X[3]=\{\infty\}$.  

Next, take $N=4$. Then $N_0=2$ and $N_1=1$, and
\begin{equation*}
\cN'_4=\begin{pmatrix}  1 & 0 & 0 & 0\\ 0 & 1 & 0 & 0 \\ y & D_1y & D_2y & D_3y \end{pmatrix}. 
\end{equation*}
Thus the finite points of $X[4]$ are the common zeros of $D_2y$ and $D_3y$ outside $\cR$.  (Recall that in general $D_ny$ has poles at the points of $\cR$ as follows from the considerations of \S~\ref{sec:vals}.) A computation shows that
\begin{equation*}
D_2y=\frac{18x^2y^2+6x^6y+6x^2}{(3y^2-1)^3}, \qquad D_3y=\frac{(-12x^9+60x)y^2-252x^5y-152x^9+4x}{(3y^2-1)^5}. 
\end{equation*}

When $p=3$, we find that $D_2y=0$ and $D_3y=x^9+x$. It follows from the previous considerations that $\cR=\{\infty\}$. Since $x^9+x=x(x^8+1)=x(x^4-2x^2+2)(x^4+2x^2+2)$ has distinct roots and for each root $x_0$ there are three values $y_0$ such that $y_0^3-y_0=x_0^4$, we deduce that $\Card{(X[4])}=28$.  Since $D_2y=0$, taking $r=N-\sD_N=1$ in Corollary~\ref{cor:crucialgeneric} implies that $(3)_*X\subseteq W_1^-$, which, since $(3)_*X$ and $W_1^-$ are both connected of dimension one, implies that $(-3)_*X=X$. Note that in this case and contrary to the previous example, no power of the absolute Frobenius endomorphism $\pi$ of $J$ acts as multiplication by $-3$. In fact, the characteristic polynomial of $\pi$ is $(T^2-3)^2(T^2+3)$, so the eigenvalues of $\pi^2$ (the only possible power) are $-3$ and $3$.  

When $p\neq 3$, a computation using the above values of $D_2y$ and $D_3y$ again shows that $\Card{(X[4])}=28$, the finite points having $x$-coordinate a root of $x(x^4-2x^2+2)(x^4+2x^2+2)$. This time, since $D_2y\neq 0$, $(-3)_*X\neq X$. 

Finally consider $N=5$. Since $5$ is a gap in the sequence $(\delta_n)_{n\geq 1}$, in fact $X[5]=\{\infty\}$ by Proposition~\ref{prop:smallorder}. Note that, since $5$ is a gap, $\sD_5=\sD_4$ and $\cN_5$ is obtained from $\cN'_4$ by adding a column on the right:
\begin{equation*}
\cN'_5=\begin{pmatrix}   1 &  0  &  0  &  0  &  0\\  0 & 1 & 0 & 0 & 0\\ y & D_1y & D_2y & D_3y & D_4y \end{pmatrix}.  
\end{equation*}
Since $X[5]=\{\infty\}$, $\cN_5(\xi)$ has rank $3$ at all $\xi\in X-\cR$. In particular, by Corollary~\ref{cor:crucial} and the preceeding discussion, $D_4y$ cannot vanish at the points of $X[4]$, so $D_4y\neq 0$.  Although this is somewhat artificial, it does show that our theory sometimes enables one to show that  $\Pi_{N,s}$ is non-zero without actually computing it.

\subsubsection{The curves $y^3=x+ax^5+x^{10}$ in characteristic $5$} \label{ex:g9family} Finally we give an example of a non-constant family of curves (over the affine line) where $\Delta_N=0$. We suppose $p=5$.  Again, since the affine curve is smooth, the integral domain $k[x,y]/(y^3-x-ax^5-x^{10})$ is integrally closed of dimension one. The na\"{\i}ve projective closure $\tilde{X}_a\subseteq \bP^2$ with equation $y^3z^7=xz^9+ax^5z^5+x^{10}$ has a unique point $\tilde{\infty}=(0:1:0)$ at $\infty$ which, since $3$ and $10$ are coprime, is a cusp. Therefore the normalization $X_a\to \tilde{X_a}$ has a unique point $\infty$ above $\tilde{\infty}$ and $X_a-\{\infty\} \to \tilde{X}_a-\{\tilde{\infty}\}$ is an isomorphism. Furthermore, $x$ has a pole of order $3$ at $\infty$ and $y$ a pole of order $10$ and these functions are finite elsewhere. Again, every function finite away from $\infty$ is a polynomial in $x$ and $y$. Hence the genus, which is the number of gaps in the sequence $(\delta_n)_{n\geq 1}$ is equal to $9$.  Since $x$ has a pole of order $3$, we take $d=3$. Also, $\cR=\{\infty\}\cup \{(x_0,0)\mid x_0+ax_0^5+x_0^{10}=0\}=X[3]$.

To show that the family is not constant, we shall show that the curves $X_0$ and $X_1$ are not isomorphic, and in fact that their Jacobians $J_0$ and $J_1$ (considered simply as abelian varieties) are not isogenous over an algebraic closure of $\bF_5$. 

We first consider $X_0$, which has equation $y^3=x+x^{10}$. If $u^3=-x$ and $v^{9}=-\frac{y}{u}$, then $\bF_5(u,v)\subseteq \bF_5(x,y)$ and $u^{27}+v^{27}=1$, so there is a surjective morphism $\cF_{27}\to X_0$, where $\cF_{27}$ is the (projective) Fermat curve of degree $27$. Note that $5$ generates the multiplicative group $(\bZ/27\bZ)^\times$. Recall that in general, if $m\in \bZ_{\geq 1}$ is prime to $p$ and $p$ generates the multiplicative group $(\bZ/m\bZ)^\times$, then the roots of the characteristic polynomial of the Frobenius endomorphism of the degree $m$ Fermat curve $\cF_m$ over $\bF_p$ are of the form $\zeta\sqrt{p}$ for some root of unity $\zeta$. This follows from classical formulae for the number of points of $\cF_m$ over finite extensions of $\bF_p$ (see for example \cite{Lang78}, Chapter 1) and the fact that $p$ generates a prime ideal in the ring of integers of the $m^{\text{th}}$ cyclotomic field. By \cite{Ta66}, we deduce that the jacobian of $\cF_{27}$, and hence also its quotient $J_0$, is isogenous to a power of a supersingular elliptic curve.

Now we consider $X_1$. A computation shows that $\Card{X_1(\bF_{5^4})}=696=5^4+1+70$. If $J_1$ were isogenous to a power of a supersingular curve, then again by \cite{Ta66}, the roots of the characteristic polynomial of the Frobenius endomorphism over $\bF_{5^4}$ would be of the form $5^2\zeta$ for some root of unity $\zeta$, so their sum would be an integer divisible by $5^2$.  But the computation shows that their sum is $-70$, so this is not the case and $J_1$ is not isogenous to $J_0$. 

Returning to $X_a$ with $a$ general, we first consider $N=10$. Then $\sD_N=5$, $N_0=4$, $N_1=1$ and $N_2=0$, so
\begin{equation*}
\cN'_{10}=\begin{pmatrix}  1 & 0 & 0 & 0 & 0 & 0 & 0 & 0 & 0 & 0\\ 0 & 1 & 0 & 0 & 0 & 0 & 0 & 0 & 0 & 0\\ 0 & 0 & 1 & 0 & 0 & 0 & 0 & 0 & 0 & 0\\ 0 & 0 & 0 & 1 & 0 & 0 & 0 & 0 & 0 & 0\\ y& D_1y& D_2y & D_3y & D_4y & D_5y & D_6y & D_7y & D_8y& D_9y \end{pmatrix}. 
\end{equation*} 
Thus $\Delta_{10}=D_4y$. But $D_4y=0$, so $\Delta_{10}=0$.  From Corollary~\ref{cor:crucialgeneric}, we deduce that $(5)_*X\subseteq W_5^-$.   Next, a computation shows that the $D_ny$'s, where $4\leq n\leq 9$, do not simultaneously vanish outside $\cR$, so $X[10]\subseteq \cR$. Since $\cR=X[3]$, this implies that $X[10]=\{\infty\}$.  Thus is in contrast with Example~\ref{ex:MNrnotmax} where we also had $\Delta_N=0$, but in which $X[N]$ is large. Also $D_5y=0$ but $D_6y\neq 0$, so $\cM_{10}^{(r)}$ has maximal rank when $r\in \{0,1,2,3\}$. In particular $(-9)_*X\neq X$ which is again in contrast with Example~\ref{ex:MNrnotmax}.  

Since $11$ is a Weierstrass gap, $\cN'_{11}$ is obtained from $\cN'_{10}$ by adding a column, transpose of $(0,0,0,0,D_{10}y)$, on the right. Thus again $\Delta_{11}=D_4y=0$ and we deduce that $(5)_*X\subseteq W_6^-$ which is weaker than what we obtained from $N=10$. But now because of the extra column, we deduce that $(9)_*X\not\subseteq W_2^-$, which is stronger than the information obtained from $N=10$. Also $X[11]=\{\infty\}$, either by Proposition~\ref{prop:smallorder} or by a similar argument to the case $N=10$. 

Similarly $\Delta_{12}=D_5y=0$ and $\Delta_{13}=\Delta_{14}=(D_5y)^2-D_4y\,D_6y=0$.  On the other hand, $\Delta_{15}=(D_6y)^2-D_5y\,D_7y=(D_6y)^2\neq 0$.  It is an open question whether there are infinitely many values of $N$ for which $\Delta_N=0$.

\end{document}